\pgfplotsset{compat=newest}
\pgfplotsset{plot coordinates/math parser=false}
\newcommand{\arrow}{\rightarrow}
\newcommand{\ds}{\displaystyle}
\newcommand{\kw}{\rule{2mm}{2mm}}
\DeclareMathOperator*{\prox}{prox}
\newcommand{\norm}[1]{{\|  #1\|}}
\renewcommand{\l}{\left}
\renewcommand{\r}{\right}
\newcommand{\om}{\Omega}
\newcommand{\reals}{\mathbb{R}}
\newcommand{\sign}{\mathrm{sign}}
\DeclareMathOperator*{\argmin}{argmin} % no space, limits on side in displays
\renewenvironment{proof}{{Proof.}}{\hfill\kw}
\newcounter{theassumption}
\newtheorem{assumption}[theassumption]{{Assumption}}
\newtheorem{definition}{{Definition}}
\newtheorem{remark}{{Remark}}
\newtheorem{theorem}{{Theorem}}
\begin{document}
\title[A second-order method for composite sparse optimization problems]{An enriched second-order method for nonconvex composite sparse optimization problems }

\author{J.C. De los Reyes$^\ddag$ and P. Merino$^\ddag$}
%\address{}
\address{$^\ddag$Research Center on Mathematical Modeling (MODEMAT) and Department of Mathematics, Escuela Polit\'ecnica Nacional, Quito, Ecuador}
\keywords{Nonsmooth optimization, linear composite optimization, $\ell^1$--norm}
\subjclass[2010]{49M15, 65K05, 90C53, 90C90}
% 49M15 Newton type methods
% 65K05 Numerical mathematical programming methods
% 90C53	Methods of quasi-Newton type
% 90C90 Applications of mathematical programming
\thanks{$^*$This research has been supported by project PIMI-17-01 granted by Escuela Politécnica Nacional, Quito--Ecuador.}

\smallskip
\begin{abstract}
In this paper we propose a second--order method for solving composite sparse optimization problems consisting of minimizing the sum of a differentiable (possibly nonconvex) function and a nondifferentiable convex term. The composite nondifferentiable convex penalizer is given by the $1$--norm of a matrix multiplied with the coefficient vector. The proposed algorithm relies on the three main ingredients: the minimum norm subgradient, a projection step and generalized second--order information associated to the nondifferentiable term. By combining these ideas, we devise a generalized second--order method for solving composite sparse optimization problems, for which the convergence analysis is carried out. Problems involving the minimization of the \emph{anisotropic total variation} or \emph{differential graph operators} can be efficiently solved with the proposed algorithm. We present several computational experiments to show the performance of our approach for different application examples.
\end{abstract}
\maketitle

\section{Introduction}
% In several areas like statistics, machine learning, image processing or optimal control, among other, there are many minimization problems involving a cost function that forces to get some sparsity pattern in the solution. In data science, for instance, the 1--norm penalization ability to produce sparse structures is well exploited for variable selection \cite{rish2014a, rish2014b}.

The composite problem of minimizing the cost $f(x) + \beta \norm{Cx}_1$, with $f$ differentiable and for some matrix $C$, is relevant in practice when sparsity is affected by a given pattern matrix. For example, when $C$ corresponds to the successive difference operator, then $\norm{C x}_1$ becomes the anisotropic total variation of $x$, which has several applications in signal and image processing \cite{tibshirani05}.
Moreover, higher order differential operators (e.g., the graph Laplacian) may be covered by $C$, which arise in, e.g., trend filtering over graphs \cite{wang2016trend} or nonlocal image denoising \cite{gilboa2009nonlocal}.

While first-order algorithms have been extensively developed for minimizing special cases of the objective function $f(x) + \beta \norm{Cx}_1$, mainly with $f$ strictly convex (see, e.g., \cite{beck2009,chambollepock2016}), second--order methods have not really been focus of attention in the context of nonsmooth optimization, despite their well-known superlinear convergence properties. One of the reasons for the lack of popularity is related to the high storage requirements and computational cost at each iteration, that turn out to prohibitive in absence of additional computing tools. However, second--order methods can be practical and advantageous if combined with cost--reduction and parallelization techniques \cite{anil2020}. Moreover, differently from most first-order methods, they are well-suited for handling nonconvex costs, which are incresingly important for image processing tasks, e.g. \cite{sciacchitano2015}.

One of the first second-order algorithms developed for solving composite problems was introduced in \cite{fletcher82} for a general composition of a smooth and a nonsmooth functions. In their approach, the cost function is approximated by smooth functions and then the surrogate smoothed model is solved using a trust-region algorithm. The surrogate model is itself a nonsmooth composite problem which is solved by expressing the nonsmooth penalization as a polyhedral function, leading to a constrained quadratic optimization problem. However, this procedure, in the case of the $\ell_1$-norm, requires a dense matrix of size $m\times 2^m$ to express $\norm{Cx}_1$ as a polyhedral function using the columns of $H$, which might be prohibitive for large values of $m$.

More recently, a primal-dual second-order method was proposed in \cite{dhingra17}. There, a new variable $y$, which represents the composite term, is introduced in order to cope with the penalization term as a constraint. This constraint is penalized by introducing an additional dual variable at the cost of increasing the size of the problem. The variant in this approach, formulated in \cite{dhingra17}, uses the proximal operator in order to represent the Lagrange function by means of its Moreau's envelope function. Then, a generalized second-order Hessian of the Lagrangian is introduced by computing the Clarke subgradient of the associated proximal operator. This generalized Hessian provides second-order updates for the primal and the dual variables. However, its second--order system still requires the computation of the proximal operator of the nonsmooth penalizer, which does not have a closed form in the case of the term $\norm{Cx}_1$.

Building up on the orthant--wise second-order method developed in \cite{dlrlm07}, we devise in this paper a new algorithm which utilizes second--order information from the regular part $f$ and also from the nondifferentiable composite term $\norm{C \cdot}_1$. As expected, the transformation of the variable by the pattern matrix $C$ entails new numerical and theoretical challenges, since the sparsity term is no longer separable. The main novelty to deal with this consists of the extension of generalized descent direction developed in \cite{dlrlm07}, by using second-order information associated with the nonsmooth term as well as the convergence analysis related to the proposed algorithm. Therefore, this paper contributes with a new efficient second--order algorithm to solve composite sparse optimization problems with well-founded theoretical properties. Indeed, by applying the techniques from \cite{attouch2009}, using the \L ojasiewikcz condition, we derive the corresponding convergence analysis of the proposed method.

We organize this paper by setting the problem in Section 2. In Section 3 we describe the different elements of the algorithm. Section 4 is devoted to the convergence analysis and the derivation of the corresponding rate. Finally, we present the numerical tests that shows how second--order information is relevant for the numerical performance.

\section{Problem formulation}
Let $f: \reals^m \arrow \reals$ be a differentiable function and let $\beta >0$. We are interested in the numerical solution of the unconstrained optimization problem
\begin{equation} \label{eq:P}
\min_{x \in \reals^m} \,\varphi(x):=f(x) + \beta \| C x \|_1, \tag{\bf{P}}
\end{equation}
where $\| \cdot \|_1$ corresponds to the standard $1$--norm in $\reals^m$ and $C$ is a real $n\times m$ matrix with rows $c_i$, for $i=1,\ldots,n$. We shall notice that by modifying the matrix $C$, problem \eqref{eq:P} also covers the so-called \emph{fused problem}
\begin{equation} \label{eq:P_F}
\min_{x \in \reals^m} \,\varphi(x):=f(x) + \alpha \norm{x}_1 + \beta \| C x \|_1. %\tag{$P_F$}
\end{equation}

%This formulation also covers the fused Lasso problem i.e. $C$ is the forward difference matrix, representing the function $\sum_{i=1}^{N-1}(x_{i+1}-x_i)$. i.e. $\| Cx\|_1 = \sum_{i=1}^{N-1}|x_{i+1}-x_i|$.
In order to obtain existence of solutions for problem \eqref{eq:P} the following conditions are assumed. The existence of solutions then follows from Weierstrass' theorem.
\begin{assumption}{\label{h:f}}
%	 Let $\bar x$ be the solution of problem \eqref{eq:P}, throughout this paper we will assume that $f$ is convex and satisfies the following condition: there exists positive constants $c$ and $C$ such that the following relation is satisfied
%\begin{equation}
%c \|d \|_2^2 \leq d^\top \nabla^2f(\bar x) d \leq C \|d \|_2^2, \text{ for all } d \in \reals^m.
% \label{eq:Hess_bounds}
%\end{equation}
\hspace{1mm}
\begin{enumerate}[(i)]
	%\item $f: \reals^m \arrow ]-\infty, \infty]$ is proper, lower semicontinuous and convex and the restriction to its domain $\text{\textnormal{dom}} f$, is continuous.
\item $f$ is bounded from below;
\item $f: \reals^m \arrow \reals$ is continuously differentiable, with  locally Lipschitz continuous gradient $\nabla f$;
\item $\varphi = f + \beta \norm{C\cdot}_1$ is coercive, i.e. $\lim_{\norm{x}\arrow \infty} \varphi(x) = + \infty$. %\ruggedtodo{En que sentido coerciva?}
\end{enumerate}
\end{assumption}

\subsection{First order optimality conditions} Let us denote by $\bar x$ the solution of  \eqref{eq:P} and by $\partial \phi (x)$ the subdifferential of the function $\phi$ at $x$. Moreover, let us denote by $g$ the convex nondifferentiable part of $\varphi$, that is $g(x) =  \beta \| C x \|_1$. By using the standard theory, the Fermat's condition gives the first-order necessary optimality conditions for \eqref{eq:P}:
\begin{equation}
0 \in \nabla f(\bar x) + \partial g(\bar x).	\label{eq:foc}
\end{equation}
By using subdifferential calculus rules, we may argue that if $\bar x$ is a solution for  \eqref{eq:P}, then there exists $\xi(\bar x) \in \reals^n$ such that:
	\begin{align}\label{eq:fonc}
		0 = \nabla f(\bar x)  + \beta C^\top \xi (\bar x),
	\end{align}
where the corresponding entries of $\xi( x) \in \partial \norm{\cdot}_1 (C  x)$ are given by
\begin{equation}
	\xi(x)_i
		 = \begin{cases}
			\{\sign( \langle c_i,   x\rangle)\}, & \text{ if }  \langle c_i,   x\rangle \not =0,\\
			[-1,1], & \text{ if }  \langle c_i,   x\rangle  =0.
		\end{cases} \label{eq:sd2}
\end{equation}
For a given $x$, let us define the index sets
\begin{align*}
 {\mathcal P}=\{i:  \langle c_i , x \rangle>0 \}, \quad  {\mathcal N}=\{i: \langle c_i , x \rangle<0 \}, \quad \text{and } {\mathcal A}=\{i: \langle c_i , x\rangle=0 \}.
\end{align*}
Then, condition \eqref{eq:fonc} is equivalent to the existence of $\bar{\xi}_i: = \xi (\bar x)_i$, for $i\in \bar {\mathcal A}$, such that
%\begin{equation}
%\frac{\partial f}{\partial x_i} (\bar x) =
%\begin{cases}
%-\beta , & \text{if } \sign(c_i^\top   \bar x)>0, \\
%\phantom{+}\beta, & \text{if } \sign(c_i^\top   \bar x)<0, \\
%\end{cases}
%\end{equation}
\begin{equation}
- \sum_{i\in \bar {\mathcal A}}\bar{\xi}_i c_i^\top = \frac{1}{\beta}\nabla f(\bar x ) +  \sum_{i\in \bar {\mathcal P}} c_i^\top -  \sum_{i\in\bar {\mathcal N}} c_i^\top , \label{eq:optsys}
\end{equation}
where $\bar {\mathcal P}$, $\bar {\mathcal N}$ and $\bar {\mathcal A}$ are the corresponding index sets associated to $\bar x$.

Notice that the linear system \eqref{eq:optsys} is of size $m \times p$, with $p\leq n$ being the cardinality of $\bar{\mathcal A}$. Let us denote by $C_{\mathcal{A}} \in \reals^{m\times p}$ the matrix whose columns are formed by the transposed rows indexed in $\mathcal A$ and by $\tilde C_{\bar{\mathcal{A}}}$ the corresponding augmented matrix, i.e. the matrix with the extra column given by the right--hand side of \eqref{eq:optsys}. In the following, we will assume the Rouch\'e--Capelli theorem holds. That is, the system \eqref{eq:optsys} has at least one solution provided that $\text{rank} \{\tilde C_{\bar{\mathcal{A}}} \} = \text{rank} \{ C_{\bar{\mathcal{A}}} \}$.

\section{Second-order algorithm}
We start with the construction of a \emph{descent direction}, for which we consider a vector of the form $\nabla f(x) + \beta C^\top \xi(x)$ according to \eqref{eq:sd2}.

\subsection{Computation of a descent direction}
In standard 1--norm penalized problems \cite{dlrlm07}, the natural choice for the subgradient element is the one with the minimum 2-norm or, equivalently in the convex case, the steepest descent direction \cite{sra2012optimization}. Because of the particular structure of the 1--norm, the minimum norm subgradient is also known as \emph{orthant direction}. In fact, it characterizes the orthant in which a descent direction has to be found.

However, in the case of composite optimization, the term $\norm{Cx}_1$ is no longer separable. Therefore, there is no orthant--wise interpretation for the minimum norm subgradient, which is defined in general as:
%
%\begin{equation}
% (\zeta^*(x),\xi^*(x) )\in  \displaystyle \text{argmin} \{ \|\nabla f(x)+ \alpha \zeta + \beta C^\top \xi \|_2: \zeta \in \partial \norm{\cdot}_1 ( x) \text{ and } \xi \in \partial \norm{\cdot}_1 (C  x)
%  \}
%\end{equation}
\begin{equation}\label{eq:mns.1}
 \xi^*(x)  \in  \displaystyle \text{argmin} \{ \|\nabla f(x)+  \beta C^\top \xi \|_2: \xi \in \partial \norm{\cdot}_1 (C  x)
  \}
\end{equation}

One of the drawbacks of using the minimum norm subgradient is that its computation requires the solution of an auxiliary quadratic optimization problem with box constraints. However, although an additional optimization subproblem is needed, it is not as expensive as it may appear at first sight. Indeed, since we already know that $\xi_i=\sign ( \langle c_i , x \rangle)$, if $\langle c_i , x \rangle \not =0$, we can exclude these components in the optimization problem \eqref{eq:mns.1}.

Let  $p:=|\mathcal A|$ and let us denote
$$\tilde \nabla \varphi (x):= \nabla f(x) +\beta \sum_{i\in \mathcal P} c_i^\top - \beta \sum_{i\in \mathcal  N} c_i^\top.$$
Further, let $C_{\mathcal A}$ denote the matrix obtained by removing all rows $c_i$, with $i\in \mathcal N \cup \mathcal P$, from $C$. Hence, we may reformulate problem  \eqref{eq:mns.1} as the following box--constrained quadratic optimization problem:
\begin{align}
	\min_{ \tilde \xi \in [-1,1]^p}\, \frac12 \big\| \tilde \nabla \varphi (x)
	 + \beta C_{\mathcal A}^\top \tilde \xi \big\|_2^2 \tag{MinSub}\label{eq:od}
\end{align}
Notice that this problem is of the same size as the active set cardinality at $x$. In many cases $C_{\mathcal A}C_{\mathcal A}^\top$ is nonsingular, thus problem \eqref{eq:od} has a unique solution. Moreover, the solution of \eqref{eq:od} is given by
\begin{equation}\label{eq:xi_projected}
{ \tilde \xi = \ds\mathbb{P}_{[-1,1]^p}{\{\tilde \xi-\beta C_{\mathcal A}\tilde \nabla \varphi (x)-\beta^2 C_{\mathcal A}C_{\mathcal A}^\top \tilde \xi\}, }}
\end{equation}
where $\mathbb{P}_{I}$ denotes the projection on a set $I$. Formula \eqref{eq:xi_projected} cannot be  computed as a closed--form solution. Indeed, its dual fits in a classical LASSO problem formulation. We will discuss the numerical solution for this problem in Section ...

\subsection{Second order information}\label{s:2ndorder}
Weak second-order information associated to the $1$--norm was algorithmically introduced in \cite{dlrlm07} in order to compute generalized hessian based descent directions that incorporate components coming from both the smooth and nonsmooth terms. There, the regularization of the $\ell_1$--norm by Huber smoothing allowed to obtain the targeted second-order information using the second derivative of its regularization. This procedure is analogous to consider generalized Hessians in the Bouligand subdifferential of the proximal operator $\partial_B\prox_{\frac{1}{\gamma} \, \| \cdot \|_1}$, see .

Here, we generalize this procedure to the case of composite sparse optimization. In the present case, however, the weak second order derivative of the nondifferentiable term is no longer a diagonal matrix. Indeed, recalling that the Huber regularization of the $1$--norm, for $\gamma>0$, is defined by
\begin{equation}\label{e:huber}
h_\gamma(x_i)=
\begin{cases}
\gamma \frac{x_i^2}{2} & \hbox{if } |x_i|\leq \frac{1}{\gamma},\\
|x_i|- \frac{1}{2\gamma}&\hbox{if } |x_i|> \frac{1}{\gamma},
\end{cases}
\end{equation}
we now regularize $\norm{C \cdot}_{1}$ as follows:
\[
h_\gamma(C x)=
\begin{cases}
\frac{\gamma}{2} \langle c_i,x \rangle^2  & \hbox{if } |\langle c_i,x \rangle|\leq \frac{1}{\gamma},\\
|\langle c_i,x \rangle|- \frac{1}{2\gamma}&\hbox{if } |\langle c_i,x\rangle|> \frac{1}{\gamma}.
\end{cases}
\]
Then, $\nabla h_\gamma(C u)$ is given by
\begin{equation}\label{eq:Gradh}
\nabla h_\gamma(C x)
=C^\top \l[ \frac{\langle c_i,x \rangle}{\max{\{1/\gamma, \langle c_i,x \rangle \}}} \r]_{i=1}^m ,
\end{equation}
and the ``weak Hessian" of $\norm{C \cdot}_{\ell_1}$ is given by the matrix
\begin{equation}\label{eq:Gamma} \Gamma =  \gamma C^\top D C, \quad  \text{with } \, D = \text{diag} \l( \l[ \begin{cases} 1 & \text{ if } |\langle c_i,x \rangle| \leq \frac{1}{\gamma} \\ 0 & \text{otherwise}\end{cases} \r]_{i=1}^{i=n} \r)
\end{equation}
By recalling the fact that $\prox_{\frac{1}{\gamma}\norm{\cdot}_1}$ is equal to the \textit{soft-thresholding} operator (e.g. see \cite{claval2020}), one could relize that $D \in I - \partial_B(\prox_{\frac{1}{\gamma}{\|\cdot\|_1}([\langle c_i,x\rangle]_{i=1}^n)})$, where $\partial_B$ denotes de Bouligand's subdifferential. 

We will write $\Gamma^k$ to specify that \eqref{eq:Gamma} is computed for $x=x^k$.
Now, the computation of the descent direction is carried on with help of the matrix in \eqref{eq:Gamma}, requiring the solution of the following linear system:
\begin{equation}\label{eq:direction}
\left[B^k + \beta \Gamma^k\right] d^k = - [{\nabla}f(x^k) + \beta C^\top\xi(x^k)],
\end{equation}
where $B^k$ stands either for the Hessian of $f$ at $x^k$ or an  approximation of it.

\begin{assumption}\label{h:B_spd}
	The matrix $B^k$ is symmetric positive definite and satisfies
	\begin{equation}
 \kappa \|d \|_2^2 \leq d^\top B^k d \leq K \|d \|_2^2,
 \label{eq:Bk_bounds}
\end{equation}
for all $d \in \reals^m$ and for some constants $K, \kappa >0$.
\end{assumption}

\subsection{Projection step}
In our algorithm, at each iteration, the approximated solution $x$ may be close to fulfill sparsity in the range of $C$, i.e., $\langle c_i, x \rangle \approx 0$ for some of the indexes $i$. However, small perturbations on $x$ may cause undesired sign changing in $\langle c_i, x \rangle$. When, under small perturbations on $x$, a change in the sign of the quantity $\langle c_i, x \rangle$ is detected, we might prefer to keep the updated approximated solution satisfying the sparsity condition. To achieve this, we consider a projection of $x$ to the closest point $\tilde x$ satisfying $\langle c_i, \tilde x \rangle =0$.

Thus, for a given approximated solution $x$ and a descent direction $y$, we identify those $\langle c_i, x \rangle$ which change sign with respect to the subgradient $\xi (x)$ (recall that the subgradient $\xi (x)$ has the same sign of $\langle c_i, x \rangle$, when it is not 0).
For the sign identification process we introduce the set
\begin{equation}\label{eq:esign_set}
\mathcal{S}(y) = \{ i =1,\ldots,n: \sign  ( \langle c_i, y \rangle) \not = \sign({\xi _i (x)})\},
\end{equation}
and define  $C_{s}:= C(\mathcal{S}(y),:)$. Then, we consider the projection over the subspace $ \mathcal{A}_S$, defined by
\begin{equation}
\mathcal{A}_S = \{ y \in \reals^m:  \langle c_i , y \rangle=0, \text{for } i \in \mathcal{S}(y) \}
 \end{equation}
Thus, the projection $\mathcal P$ on the set $\mathcal A_S$ is obtained as the solution of the following problem:
\begin{equation}\label{eq:proj} \tag{Prj}
\min_{\tilde x \in \mathcal{A}_S} \frac12\norm{\tilde x - x}^2_2 \Leftrightarrow  \min_{C_{s}\, \tilde x  = 0} \frac12\norm{\tilde x - x}^2_2
\end{equation}
%
%Thus, $\tilde x=\mathcal P x$ is such that

It is known that \eqref{eq:proj} is  a saddle point problem. A particular but important case is when $C_s$ has full rank. Then, \eqref{eq:proj} is equivalent to the linear system (see \cite{BenGolLie05})
{
\begin{equation}\label{eq:fullrank proj}
	\l[\begin{array}{cc}
		I & C_{s}^\top \\
		C_ {s} & O
	\end{array} \r] \l[\begin{array}{c} \tilde x\\y \end{array}\r] =
	\l[\begin{array}{cc}
		x \\
		0
	\end{array} \r].
	\end{equation}}
Furthermore, by introducing the projections $\Pi:= C_{s}^\top ( C_{s}C_{s}^\top)^{-1} C_{s} $ and $\mathcal P=I-\Pi$, we can solve \eqref{eq:fullrank proj} explicitly and the solution of \eqref{eq:proj} reads:
\begin{subequations}\label{eq:explicit fullrank proj}
\begin{align}
	\tilde x & = \mathcal{P}\,x = x - \Pi x \quad {\in \text{span}\{c_i:i\in \mathcal{S}\}^{\perp}, } \label{eq:Pr}\\
	y & = ( C_{s}C_{s}^\top)^{-1} C_{s} \,x. %
\end{align}
\end{subequations}

Note that, $\Pi x$ is characterized as the solution of
\begin{equation}
	\min_{z \in \text{range} C_{s}^\top } {\norm{x -z}^2}.
\end{equation}
 Moreover, feasibility of $\tilde x$ implies that $C_{\mathcal S} \tilde x =0 $. From these relations, we realize that $x = \tilde x + \Pi x$, that is, $x \in \text{span}\{c_i:i\in \mathcal{S}\}^{\perp} \oplus \text{span}\{c_i:i\in \mathcal{S}\}$. In other words, the projection step removes the part belonging to $\text{range}(C_{s})$ from the current approximation.

 In the case that $C_S$ is not full rank, it cannot be guaranteed the existence of $( C_{s}C_{s}^\top)^{-1}$. Then, the common practice is to consider instead a regularization $C_{s}C_{s}^\top + \epsilon I$ for small $\epsilon>0$.

\subsection{Linesearch step}
Analogously to \cite{andrgao07,byrd011}, we consider a projected line-search rule using $\mathcal P$ given by \eqref{eq:Pr}, for choosing the step $s_k$ fulfilling the decrease condition:
\begin{equation} \label{eq:line search}
\varphi[\mathcal{P}(x^k + s_k d^k)]\leq \varphi(x^ k) + \widetilde{\nabla} \varphi(x^k)^ T[\mathcal{P}(x^k+s_k d^k)-x^k].
\end{equation}
The calculation of the step $s_k$ fulfilling the last condition is performed using a backtracking scheme.
\begin{algorithm}[H]\label{alg:A1}
\caption{Second--Order Method for Sparse Composite Optimization}
\begin{algorithmic}[1]
\STATE Initialize  $x^0$.
\WHILE{\hbox{stoping criteria is false}}
%\STATE Compute $\nabla^2 H_\gamma (u^k)$ as in~\eqref{eq:04} .%$\partial^2 H_\gamma(x)=\dfrac{\gamma}{\max{\{1,\gamma|x|\}}}-\chi\cdot\dfrac{\gamma x q}{\left(\max\{1,\gamma|x|\}\right)^2 \max\{1,|q|\}}$
\STATE Compute $\xi^k$ given by solving \eqref{eq:od}
\STATE Compute $d^k$  by solving system \eqref{eq:direction}
\STATE Compute $s_k$ using a line--search procedure
\STATE Update $x^{k+1} \gets \mathcal{P}(x^k + s_kd^k)$
\STATE $k \gets k+1$.
\ENDWHILE
\end{algorithmic}\label{alg:A1}
\end{algorithm}

\subsection{Active--set identification strategy} \label{sec: active identification}

Second-order methods are known to be expensive when it comes to the computation of a descent direction. Without any additional strategy regarding the numerical solution of system \eqref{eq:direction}, the method would hardly become practical for large problems. Therefore, it is important to look at the structure of the pattern matrix $C$ and take it into account in order to improve the computation process.

In an effort to reduce the numerical cost, we extend the definition of \emph{active sets} used in \cite{dlrlm07} in order to define an effective identification process of the components of the optimization variable which are known to fulfill optimality conditions and therefore, can be excluded when seeking for a descent direction. In this way, the optimization process takes place in a lower dimensional subspace, resulting in a significant reduction of the computation cost.

A common situation occurs when the matrix $C$ possesses a known structure e.g., when $C$ is the successive difference matrix or ``discrete gradient"; in this case, $C$ is a banded matrix. We notice that in the multiplication $Cx^k$, not all the entries of $x^k$ are taking part in the computation of a particular component of the product $Cx^k$.

Recalling the optimality condition \eqref{eq:optsys}, for each $i \in \mathcal{A}^k$ we consider the  index set denoted by $\mathcal{I}^k_i$, consisting of indexes $j\in \{1,\ldots,m \}$ such that  $c_{ij} \not = 0$ and
\begin{equation}\label{eq:secured}
	|[\nabla f(x^k) + \beta C^\top \xi^k]_{j}|\approx 0.
\end{equation}
Then, we define the set of active entries of $x^k$ by
\begin{equation} \label{eq:I_0}
\mathcal{I}^k_{0}:= \ds\cup_{i\in \mathcal{A}^k} \mathcal{I}^k_i,
\end{equation}
which corresponds to the set of indexes that are close to satisfy optimality conditions which are active. Thus, we would not move from the current approximation $x^k$ in the entries indexed by $\mathcal{I}^k_0$. By contrast, we define the set of indexes $\mathcal{I}^k_F:=\{1,\ldots,m\}\setminus \mathcal{I}^k_0$, in which the variable is free to move. Thus, we consider the reduced system:
\begin{equation}\label{eq:reduced}
\left[\tilde B^k + \beta \tilde \Gamma^k\right] \tilde d^k = - [{\nabla}f(x^k) + \beta C^\top\xi(x^k)]_{j\in \mathcal{I}_F},
\end{equation}
where
\begin{equation*}
	\tilde B^k := [B^k_{ij}]_{i \in \mathcal{I}_F,j\in \mathcal{I}_F}, \quad \text{and } \quad  \tilde \Gamma^k:=[\Gamma^k_{ij}]_{i \in \mathcal{I}_F,j\in \mathcal{I}_F}.
\end{equation*}
Then, step 4 of Algorithm \ref{alg:A1} can be modified  using \eqref{eq:reduced} and by choosing the descent direction $d$ computed according to the formula
\begin{equation} \label{eq:descendr}
	d_j =
	\begin{cases}
		\tilde d_j & \text {if } j\in  \mathcal{I}_F, \\
		0 &  \text {if } j\in  \mathcal{I}_0.
	\end{cases}
\end{equation}

%%%%%%%%%%%%%%%%%%%%%%%%%%%%%%%%%%
%%%%%%%%%%%%%%%%%%%%%%%%%%%%%%%%%%
%%%%%%%%%%%%%%%%%%%%%%%%%%%%%%%%%%
\section{Convergence Analysis}
Let $x^k$ be the approximated solution computed by Algorithm \eqref{alg:A1} in the $k$-th iteration. Moreover,  let $C_{k}:=C_{\mathcal{S}^k}$, for $k=1,2,\ldots$, and $\xi^k:=\xi(x^k)$. Hence, at every step  $\Pi = C_k(C_kC_k^\top)^{-1}C_k$. In addition, for a vector $y\in \reals^m$, according to \eqref{eq:esign_set}, we consider the index set
\begin{equation}\label{eq:Sk}
\mathcal{S}_k = \{ i=1,\ldots,n: \sign \langle c_i , x^{k} + s d^k\rangle\not =\sign (\xi^k_i)\}.
\end{equation}
%\ruggedtodo{Se puede relajar el requerimiento de que $s$ sea pequeño, exigiendo solamente que ${|\langle c_i,x^k \rangle |\leq 1/\gamma}$
%}
\begin{remark}\label{r:s_mall}
	It follows from the definition of $\mathcal{S}_k$ that for $s$ sufficiently small $x^k$ belongs to the null space of $C_{k}$ and the index set $\mathcal{S}_k$ may be equivalently defined as $$\mathcal{S}_k = \{  i=1,\ldots,n:   \sign (\xi_i) \, \sign\langle c_i, d^k \rangle \leq 0\}.$$
	Indeed, this can be seen from the fact that if $i\in \mathcal{S}_k$ then we have that if $\langle c_i, x^k \rangle \not = 0$ then $\xi^k_i = \sign{\langle c_i,x^k \rangle }$ and, for sufficiently small $s$, we have  $\sign \langle c_i,x^k +sd^k\rangle = \sign\langle c_i,x^k \rangle  \not = \xi^k_i$, which is a contradiction. Therefore, the only possibility is that $\langle c_i,x^k \rangle  =0$. Thus, $\sign\langle c_i, d^k \rangle \sign (\xi^k_i)\leq 0$.
\end{remark}
\begin{theorem}{\label{t:monotonicity}} Let Assumptions \ref{h:f} and \ref{h:B_spd} hold, and let $x^k$ be the approximated solution for \eqref{eq:P} at the $k$th iteration of Algorithm \ref{alg:A1} and let $d^k$ be the corresponding direction computed using \eqref{eq:direction}. Let us assume that $C_k$ defined in projection step \eqref{eq:proj} is full rank. Moreover, let us assume that at every step $\langle c_i, d^k\rangle \not =0$ for some i, and that the parameter $\gamma =\gamma_{k+1}$ is chosen in each iteration such that
	\begin{equation} \gamma_{k+1} > \frac{1}{2\beta} \l( \frac{\norm{|\nu^k|+\beta(|\xi^k|+n|\eta^k|)}^2}{\min{\langle c_i, d^k \rangle }^2} +1 \r), \label{eq:gamma_k} \end{equation}
	where the minimum is taken from those $\langle c_i, d^k \rangle \not =0$, where $\nu^k$ and $\eta^k$ being the vectors of coefficients of $\Pi \nabla f(x^k)$ and $\Pi c_{i^*}$ on $\text{span}\{c_i: i \in \mathcal{S}^k\}$, respectively.  Here $i^*$ is such that $|\langle c_{i^*}, \Pi d^k) \rangle|=\max_{i\in \mathcal{S}_k}|\langle c_{i}, \Pi d^k) \rangle|$. Then, 	$d^k$ is a descent direction, i.e.:
	\begin{equation}
	\varphi(x^{k+1})<\varphi(x^k).
	\end{equation}
\end{theorem}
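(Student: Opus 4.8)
The plan is to establish the one–sided inequality $\widetilde{\nabla}\varphi(x^k)^\top\mathcal{P}d^k<0$ and then to read off the strict decrease from the backtracking step. For the reduction, observe that by Remark~\ref{r:s_mall}, for $s>0$ small every index $i\in\mathcal{S}_k$ satisfies $\langle c_i,x^k\rangle=0$; hence $C_kx^k=0$, $\Pi x^k=0$, and therefore $\mathcal{P}(x^k+sd^k)-x^k=s\,\mathcal{P}d^k$ with $\langle c_i,\mathcal{P}d^k\rangle=0$ for every $i\in\mathcal{S}_k$. Substituting this into the line–search rule \eqref{eq:line search} gives, for the accepted (positive) step $s_k$, the bound $\varphi(x^{k+1})\le\varphi(x^k)+s_k\,\widetilde{\nabla}\varphi(x^k)^\top\mathcal{P}d^k$, so everything comes down to the sign of $\widetilde{\nabla}\varphi(x^k)^\top\mathcal{P}d^k$.

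To analyze that sign, put $r^k:=\nabla f(x^k)+\beta C^\top\xi^k$, so that $(B^k+\beta\Gamma^k)d^k=-r^k$ by \eqref{eq:direction}, and use the identity $\widetilde{\nabla}\varphi(x^k)=r^k-\beta\sum_{i\in\mathcal{A}^k}\xi^k_i\,c_i^\top$ together with $\mathcal{P}d^k=d^k-\Pi d^k$ and $\langle c_i,\mathcal{P}d^k\rangle=0$ on $\mathcal{S}_k$ to get
\begin{equation*}
\widetilde{\nabla}\varphi(x^k)^\top\mathcal{P}d^k=\langle r^k,d^k\rangle-\langle\Pi r^k,d^k\rangle-\beta\!\!\sum_{i\in\mathcal{A}^k\setminus\mathcal{S}_k}\!\!\xi^k_i\,\langle c_i,\mathcal{P}d^k\rangle .
\end{equation*}
The first term is the descent engine: by Assumption~\ref{h:B_spd} and $\Gamma^k=\gamma_{k+1}C^\top DC\succeq 0$,
\begin{equation*}
\langle r^k,d^k\rangle=-\,d^{k\top}(B^k+\beta\Gamma^k)\,d^k\;\le\;-\,\kappa\|d^k\|_2^2-\beta\gamma_{k+1}\,\|D^{1/2}Cd^k\|_2^2 ,
\end{equation*}
and since $\mathcal{S}_k\subseteq\mathcal{A}^k$ lies in the support of $D$, the last piece is at most $-\beta\gamma_{k+1}\langle c_{i^*},d^k\rangle^2$, with $i^*$ as in \eqref{eq:gamma_k}. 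The remaining two terms are errors: expanding $\Pi r^k=\Pi\nabla f(x^k)+\beta\sum_i\xi^k_i\,\Pi c_i^\top$ and the projections $\Pi d^k$, $\Pi c_{i^*}$ in the basis $\{c_i:i\in\mathcal{S}_k\}$ (using that $C_k$ is full rank) makes the coordinate vectors $\nu^k$ and $\eta^k$ of \eqref{eq:gamma_k} appear, and a Cauchy--Schwarz/triangle-inequality estimate controls them by
\begin{equation*}
\big|\langle\Pi r^k,d^k\rangle\big|+\beta\!\!\sum_{i\in\mathcal{A}^k\setminus\mathcal{S}_k}\!\!|\xi^k_i|\,|\langle c_i,\mathcal{P}d^k\rangle|\;\le\;\big\|\,|\nu^k|+\beta(|\xi^k|+n|\eta^k|)\,\big\|_2\,\big|\langle c_{i^*},d^k\rangle\big| .
\end{equation*}

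Combining the displays, with $t:=|\langle c_{i^*},d^k\rangle|$ one obtains $\widetilde{\nabla}\varphi(x^k)^\top\mathcal{P}d^k\le-\beta\gamma_{k+1}t^2+\|\,|\nu^k|+\beta(|\xi^k|+n|\eta^k|)\,\|_2\,t-\kappa\|d^k\|_2^2$, a downward parabola in $t$ which is already $\le 0$ once $\beta\gamma_{k+1}t\ge\|\,|\nu^k|+\beta(|\xi^k|+n|\eta^k|)\,\|_2$; via the elementary inequality $\tfrac12(u^2+1)\ge u$ this is in turn implied by requiring $\gamma_{k+1}>\tfrac{1}{2\beta}\big(\tfrac{\|\,|\nu^k|+\beta(|\xi^k|+n|\eta^k|)\,\|_2^2}{t^2}+1\big)$, and since $t^2=\langle c_{i^*},d^k\rangle^2\ge\min\langle c_i,d^k\rangle^2$ (the minimum over $\langle c_i,d^k\rangle\ne0$, which is positive by the standing hypothesis), the bound \eqref{eq:gamma_k} is sufficient; the $-\kappa\|d^k\|_2^2$ term only helps, and also settles the degenerate case $t=0$ (where no projection takes place and $d^k\ne0$). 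Thus $\widetilde{\nabla}\varphi(x^k)^\top\mathcal{P}d^k<0$, and the first paragraph yields $\varphi(x^{k+1})<\varphi(x^k)$.

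The step I expect to be the main obstacle is the error estimate of the second paragraph: one must keep careful track that $\mathcal{P}=I-\Pi$ annihilates $\langle c_i,\cdot\rangle$ only on $\mathcal{S}_k$, so the residual active indices $i\in\mathcal{A}^k\setminus\mathcal{S}_k$ require a separate bound (their ``good'' part $\langle c_i,d^k\rangle$ carries the sign of $\xi^k_i$, but the correction $-\langle c_i,\Pi d^k\rangle$ does not), and, more delicately, the threshold \eqref{eq:gamma_k} is implicit, since $d^k$---and hence $\nu^k$, $\eta^k$, $i^*$ and $\langle c_{i^*},d^k\rangle$---depends on the chosen $\gamma_{k+1}$ through $\Gamma^k$; one therefore has to argue that a $\gamma_{k+1}$ satisfying its own induced bound \eqref{eq:gamma_k} exists (the right-hand side staying under control as $\gamma_{k+1}$ grows along the relevant directions).
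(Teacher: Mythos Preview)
Your overall strategy differs from the paper's and contains a genuine gap. The paper does \emph{not} route the argument through the line--search condition \eqref{eq:line search}; instead it bounds $\varphi(x^{k+1})-\varphi(x^k)$ directly, via a first--order Taylor expansion of $f$ together with an estimate of the $\ell_1$ increment $\beta\sum_i(|\langle c_i,x^{k+1}\rangle|-|\langle c_i,x^k\rangle|)$ using the sign structure of $\mathcal S_k$, and arrives at the quantitative bound $\varphi(x^{k+1})-\varphi(x^k)\le -s\hat c\,\|d^k\|_2^2+o(s\|d^k\|)$ for small $s$. This inequality is also what is re--used later (Theorems~\ref{t:convergence} and the rate result).

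The gap in your route is the circular use of \eqref{eq:line search}. You assume the backtracking accepts some $s_k>0$ and then read off $\varphi(x^{k+1})\le\varphi(x^k)+s_k\,\widetilde\nabla\varphi(x^k)^\top\mathcal P d^k$. But the very existence of such $s_k$ is part of what the theorem must establish, and your inequality $\widetilde\nabla\varphi(x^k)^\top\mathcal P d^k<0$ is \emph{not} sufficient for that in the nonsmooth setting: for small $s$ one has
\[
\varphi\big(\mathcal P(x^k+sd^k)\big)=\varphi(x^k)+s\,\varphi'(x^k;\mathcal P d^k)+o(s),
\]
and since $\langle c_i,\mathcal P d^k\rangle=0$ only for $i\in\mathcal S_k$, the true directional derivative is
\[
\varphi'(x^k;\mathcal P d^k)=\widetilde\nabla\varphi(x^k)^\top\mathcal P d^k+\beta\!\!\sum_{i\in\mathcal A^k\setminus\mathcal S_k}\!\!|\langle c_i,\mathcal P d^k\rangle|\;\ge\;\widetilde\nabla\varphi(x^k)^\top\mathcal P d^k .
\]
Thus \eqref{eq:line search} (which has no Armijo slack factor) can fail for all small $s$ whenever $\mathcal A^k\setminus\mathcal S_k\neq\emptyset$, and the ``reading off'' step collapses. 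Your own error analysis already contains the term $\beta\sum_{i\in\mathcal A^k\setminus\mathcal S_k}|\xi_i^k|\,|\langle c_i,\mathcal P d^k\rangle|$; the fix is to target $\varphi'(x^k;\mathcal P d^k)$ instead of $\widetilde\nabla\varphi(x^k)^\top\mathcal P d^k$, which replaces $|\xi_i^k|$ by $1+|\xi_i^k|$ in that sum and then lets the rest of your quadratic--in--$t$ argument go through. Equivalently, follow the paper and estimate the $\ell_1$ increment itself rather than its linearization; the paper's use of Young's inequality $ab\le\tfrac12(a^2+b^2)$ on the cross terms is where the ``$+1$'' in \eqref{eq:gamma_k} comes from.
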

\begin{proof}
Taking into account that $x^{k+1} ={P}(x^k+sd^k)= x^k+sd^k - \Pi (x^k+sd^k) $ and $C_k$ is full rank then, by \eqref{eq:fullrank proj}, it follows that $C_k x^{k+1} = 0$. That is, $\langle c_i, x^{k+1}\rangle = 0$ for all $i \in \mathcal S_{k}$. Moreover, if $i \in \mathcal{S}_k$ we have either $\langle c_i,x^k\rangle = 0$ or $\langle c_i,x^k\rangle \not = 0$. In the first case, it is clear that $0 = |\langle c_i,x^k\rangle| \leq s|\langle c_i,d^k\rangle|$. On the other hand, if $\langle c_i,x^k\rangle \not = 0$, we have that $\sign (\langle c_i,x^k + sd^k \rangle) \not = \sign(\xi^k) = \sign (\langle c_i,x^k\rangle)$. Then, we conclude that $|\langle c_i,x^k\rangle| < s |\langle c_i,d^k\rangle|$. Hence,
\begin{equation*}
\norm{C_k x^k} = \norm{ [\langle c_i,x^k \rangle]_{i \in \mathcal{S}_k} }	 \leq s \norm{C_k} \norm{d^k},
\end{equation*}
% observe that $\Pi$ projects a vector onto  $\text{span} \{c_i^\top : i \in \mathcal{S}_k\}$ then
which implies that $\norm{\Pi x^k} \leq s \norm{C_k}^2 \norm{(C_kC_k^\top)^{-1}} \norm{d^k} \leq sc\norm{d^k}$, for some constant $c$ depending on the matrix $C$ and independent of $k$. Therefore, we obtain the estimate
\begin{align}
	\norm{x^{k+1} -x^k} & = \norm{s{\mathcal P}d^k - \Pi x^k } \nonumber\\
						& \leq s\norm{\mathcal{P}d^k} + \norm{\Pi x^k} \nonumber\\
						& \leq s(1+c) \norm{d^k}. \label{eq:descent_0}
\end{align}
Now, using  \eqref{eq:descent_0} and the first order Taylor expansion of the regular part of $\varphi$, we get
\begin{align}
	\varphi(x^{k+1}) - \varphi(x^{k}) & = f(x^{k+1}) - f(x^{k}) + \beta \norm{Cx^{k+1}}_1 -\beta \norm{Cx^{k}}_1 \nonumber \\
	& = \nabla f(x^k)^\top \l(\mathcal{P}(x^k+sd^k) -x^k \r)+ o(s \norm{d^k})\nonumber \\
	&\qquad+ \beta \sum_{i} \l(|\langle c_i, x^{k+1} \rangle|- |\langle c_i, x^{k} \rangle| \r).\label{eq:descent_1}	%& = \nabla f(x^k)^\top \l(s{P}d^k - \Pi x^k  \r)+ o(\norm{sd^k}) + \beta \sum_{i} (|\langle c_i, x^{k+1} \rangle|- |\langle c_i, x^{k} \rangle| ).\label{eq:descent_1}
\end{align}
From the second--order system \eqref{eq:direction} and the positive semidefiniteness of $\Pi$, we see that $x^{k+1}-x^k =\mathcal P (x^k +sd^k) -x^k = sd^k-\Pi( x^k +sd^k) $, therefore
\begin{align}
\nabla f(x^k)^\top(\mathcal{P}(x^k+sd^k) -x^k) %& = - \l({P}(x^k+sd^k) -x^k \r)^\top\l[ B^k + \beta \Gamma^k \r] d^k - \beta \xi^k C \l(x^{k+1} -x^k \r)  \nonumber \\
						%& = - \l(s{P}d^k -\Pi x^k \r)^\top\l[ B^k + \beta \Gamma^k \r] d^k - \beta \xi^k C \l(x^{k+1} -x^k \r)  \nonumber \\
						 =  & s\nabla f(x^k)^\top d^k - s\nabla f(x^k)^\top\Pi d^k- \nabla f(x^k)^\top\Pi x^k   \nonumber \\
						 =  -&s{d^k}^\top \l[ B^k + \beta \Gamma^k \r]d^k - s\beta {\xi^k}^\top C d^k -    \,\nabla f(x^k)^\top \Pi(x^k+sd^k). \label{eq:descent_2}
						% &= - s{d^k}^\top P \l[ B^k + \beta \Gamma^k \r] d^k + {x^k}^\top\Pi\l[ B^k + \beta \Gamma^k \r]d^k  - \beta {\xi^k}^\top C \l(x^{k+1} -x^k \r) \nonumber
						% &\leq  -\frac12  {d^k}^\top \Pi d^k - \frac12 	{d^k}^\top \l[ B^k + \beta \Gamma^k \r]^2 d^k - \beta {\xi^k}^\top C Pd^k. \nonumber %_{i\in\mathcal{S}_k}
\end{align}
%Let us consider the last term in the previous relation. By definition of $\Pi$, we have that $\Pi = C_{k}(C_kC_k^\top)^{-1}C_k(x^k+sd^k)= C_{k}(C_kC_k^\top)^{-1} [\langle c_i, x^k+sd^k\rangle]_i $. If $i\in\mathcal{S}_k$ then $\sign \langle c_i, x^k+sd^k\rangle \not = \xi_i$
%
Note that $\Pi=\Pi^2$; moreover, it is also a symmetric positive semi--definite matrix. In addition, we have that $\Gamma^k=\gamma C^\top D^k C$ is symmetric and positive semidefinite by its construction. Further, by Assumption \ref{h:B_spd} we have that  exists a positive constant $\hat c$, independent of $k$, such that ${d^k}^\top B^k d^k \geq \hat c \norm{d^k}^2$. Therefore, these matrix properties imply
%In addition, taking into account that $C_s Pd^k = 0$ we infer the following relation:
%In addition, by noticing that $C^\top \xi^k \in \partial( \norm{C \cdot}_1)(x^k) $
\begin{align}
\nabla f(x^k)^\top({P}(x^k+sd^k) -x^k)  \leq &
-{d^k}^\top B^k d^k - {s}\gamma\beta (Cd^k)^\top D^k (Cd^k)  - s\beta {\xi^k}^\top C d^k \nonumber \\
&- \,\nabla f(x^k)^\top \Pi(x^k+sd^k) \nonumber \\
%& \quad - \beta {\xi^k}^\top C \l(sd^k -\Pi( x^k +sd^k) \r)  \nonumber \\
\leq &- s{\hat c} \norm{d^k}^2 - \gamma s \beta \sum_{i: |\langle c_i, x^k \rangle|\leq 1/\gamma} \langle c_i,d^k \rangle^2 -  s \beta \sum_{\substack{i\in \mathcal{S}_k %\\ \langle c_i, x^k \rangle=0
}
}
\xi_i^k\langle c_i, d^k \rangle
 \nonumber \\
 & -s\beta\sum_{i\not \in \mathcal{S}_k} \xi^k_i \langle c_i, d^k \rangle  - \,\nabla f(x^k)^\top \Pi(x^k+sd^k).\label{eq:descent_3}
\end{align}

 Let us focus on the sum on the right--hand side of  \eqref{eq:descent_1}. Since for all $i \in \mathcal{S}_k = \{ i \in\{1,\ldots,n \}: \sign \langle c_i , x^{k} + s d^k\rangle\not =\sign (\xi^k_i)\}$, we have $\langle c_i, x^{k+1}\rangle = 0$; then:
 \begin{align}
 \nonumber\sum_{i} (|\langle c_i, x^{k+1} \rangle|- |\langle c_i, x^{k} \rangle| )	&= \sum_{i\not \in \mathcal{S}_k} (|\langle c_i, x^{k+1} \rangle|- |\langle c_i, x^{k} \rangle| ) - \sum_{i \in \mathcal{S}_k} |\langle c_i, x^{k} \rangle| \\
 	&= \sum_{i\not \in \mathcal{S}_k} (|\langle c_i, \mathcal{P}(x^k +sd^k) \rangle|- |\langle c_i, x^{k} \rangle| ) - \sum_{i \in \mathcal{S}_k} |\langle c_i, x^{k} \rangle| \nonumber \\
 	&\leq \sum_{i\not \in \mathcal{S}_k} |\langle c_i, x^k +sd^k \rangle| + |\langle c_i, \Pi(x^k +sd^k) \rangle|  - |\langle c_i, x^{k} \rangle|  \nonumber \\
 	&\leq \sum_{i\not \in \mathcal{S}_k} \xi^k_i\langle c_i, x^k +sd^k \rangle + |\langle c_i, \Pi(x^k +sd^k) \rangle|  - |\langle c_i, x^{k} \rangle| \nonumber \\
 	&\leq \sum_{i\not \in \mathcal{S}_k} \xi^k_i\langle c_i, sd^k \rangle + |\langle c_i, \Pi(x^k +sd^k) \rangle|  \nonumber
\end{align}
 {
 Using  Remark \ref{r:s_mall},  it follows that $C_kx^k =0$ if $s$ is small enough, hence
 \begin{align}
\sum_{i} (|\langle c_i, x^{k+1} \rangle|- |\langle c_i, x^{k} \rangle| )
 	&\leq \sum_{i\not \in \mathcal{S}_k} \xi^k_i\langle c_i, sd^k \rangle + s|\langle c_i, \Pi d^k) \rangle| \label{eq:descent_4}.
 \end{align}
Inserting \eqref{eq:descent_3} and \eqref{eq:descent_4} in \eqref{eq:descent_1}	obtain the relation:
\begin{align}
	\varphi(x^{k+1}) - \varphi(x^{k})  \leq &- s\hat c \norm{d^k}^2 - \gamma s \beta \sum_{i: |\langle c_i, x^k \rangle|\leq 1/\gamma} \langle c_i,d^k \rangle^2 -  s \beta \sum_{\substack{i\in \mathcal{S}_k %\\ \langle c_i, x^k \rangle=0
}
}
\xi_i^k\langle c_i, d^k \rangle\nonumber \\
	& + s\beta\sum_{i\not \in \mathcal{S}_k} |\langle c_i, \Pi d^k) \rangle|-    \,\nabla f(x^k)^\top \Pi(x^k+sd^k) + o(s \norm{d^k})\nonumber\\
\leq &- s\hat c\norm{d^k}^2 - \gamma s \beta \sum_{i: |\langle c_i, x^k \rangle|\leq 1/\gamma} \langle c_i,d^k \rangle^2 + o(s \norm{d^k}) \nonumber \\
&\qquad - s \beta \sum_{\substack{i\in \mathcal{S}_k %\\ \langle c_i, x^k \rangle=0
}
}
\xi_i^k\langle c_i, d^k \rangle
	 + s\beta |\mathcal{S}^C_k| |\langle c_{i^*}, \Pi d^k) \rangle|-    \,\nabla f(x^k)^\top \Pi(x^k+sd^k) , \label{eq:descent_5}
	\end{align}
where $|\mathcal{S}^C_k|$ denotes the cardinality of the complement of the set $\mathcal{S}_k$ and $i^*$ is the index where the term $|\langle c_{i}, \Pi d^k) \rangle|$ attains it maximum in  $\mathcal{S}_k$.

By using again Remark \ref{r:s_mall}, and taking into account that $\Pi$ projects onto $\text{span}\{c_i: i \in \mathcal{S}_k\}$, we can be estimate the last three terms as follows:
\begin{align}
\Big|\nabla f(x^k)^\top \Pi(x^k+sd^k) &+ s \beta \sum_{\substack{i\in \mathcal{S}^k %\\ \langle c_i, x^k \rangle=0
}
}
\xi_i^k\langle c_i, d^k \rangle -s\beta |\mathcal{S}^C_k| |\langle c_{i^*}, \Pi d^k) \rangle|  \Big| \nonumber \\
&=\Big|[\Pi\nabla f(x^k)]^\top (x^k+sd^k) + s \beta \sum_{\substack{i\in \mathcal{S}_k \\ \langle c_i, x^k \rangle=0
}
}
\xi_i^k\langle c_i, d^k \rangle  -s\beta |\mathcal{S}^C_k| |\langle \Pi c_{i^*}, d^k) \rangle|\Big| \nonumber\\
& = \big|\sum_{i\in\mathcal{S}_k}\nu^k_i \langle c_i, x^k+sd^k\rangle - s \beta \sum_{\substack{i\in \mathcal{S}_k \\ \langle c_i, x^k \rangle=0
}
}
|\xi_i^k|\,|\langle c_i, d^k \rangle| -s\beta |\mathcal{S}^C_k| |\langle \sum_{i\in\mathcal{S}_k} \eta^k_i c_i, d^k) \rangle| \big| \nonumber  \\
& \leq  s\sum_{ \substack{i\in\mathcal{S}_k \\ \langle c_i,x^k\rangle=0 }}(|\nu^k_i| + \beta(|\xi^k_i| +|\mathcal{S}^C_k||\eta_i^k|))\, |\langle c_i,d^k\rangle| \nonumber\\
& \leq  s \Big( \sum_{ \substack{i\in\mathcal{S}_k \\ \langle c_i,x^k\rangle=0 }}\frac12(|\nu^k_i|+\beta (|\xi^k_i|+n|\eta_i^k|))^2 +  \frac12|\langle c_i,d^k\rangle|^2 \Big). \label{eq:desced_3}
\end{align}
Notice that we have assumed that the set $\{i: \langle c_i,x^k\rangle \leq{1/\gamma} \} \not = \emptyset$, otherwise the right--hand side of \eqref{eq:desced_3} vanishes. Using  $\gamma=\gamma_k$ given in \eqref{eq:gamma_k} in the last relation and inserting in \eqref{eq:descent_5}, we arrive to
\begin{align}\label{eq:qgrowth}
\varphi(x^{k+1}) -\varphi(x^k)   \leq &
- s\hat c \norm{d^k}^2  + o(s\norm{d^k}),
\end{align}
which allows us to conclude that $d^k$ is a descent direction.
}
\end{proof}

There are nonconvex problems for which Assumption \ref{h:B_spd} can not be fullfilled, e.g. when $f$ is concave. In this case, the last proof can be modified to cope with this situation. We will need the following  assumption.
\begin{assumption}\label{h:Bk2}
	The matrix $B^k$ satisfies
	\begin{equation}
 |d^\top B^k d| \leq \hat C \|d \|_2^2, \qquad \text{ for all } d \in \reals^m,
\end{equation}
for some positive constant $\hat C$.
\end{assumption}

\begin{theorem}
	Let Assumptions \ref{h:f} and \ref{h:Bk2} hold. Consider $x^k$, $\nu^k$ and $\eta^k$ as in Theorem \ref{t:monotonicity}. Moreover, assume in addition that there exist a constant $\tilde C>0$ such that
	\begin{equation}\label{eq:coercivity2}
			0<\tilde{C}\norm{d^k}_2^2 \leq \sum_{i: |\langle c_i, x^k \rangle|\leq 1/\gamma}{\langle c_i, d^k \rangle }^2,
	\end{equation}
	for every $k$, and that the parameter $\gamma$ is chosen at each iteration as follows
	\begin{equation} \gamma_{k+1} > \frac{1}{2\beta} \l( \frac{2\hat{C}\norm{d^k}_2^2}{\sum_{i: |\langle c_i, x^k \rangle|\leq 1/\gamma}{\langle c_i, d^k \rangle }^2}+ \frac{\norm{|\nu^k|+\beta(|\xi^k|+n|\eta^k|)}_2^2}{\min{\langle c_i, d^k \rangle }^2} +1 \r), \label{eq:gamma_k}
	\end{equation}
	then, 	$d^k$ is a descent direction, i.e.:
	\begin{equation*}
	\varphi(x^{k+1})<\varphi(x^k).
	\end{equation*}
\end{theorem}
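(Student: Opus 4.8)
The plan is to run the proof of Theorem~\ref{t:monotonicity} essentially verbatim, altering only the one step that invoked Assumption~\ref{h:B_spd}. First I would reproduce, unchanged, the estimate $\norm{x^{k+1}-x^k}\leq s(1+c)\norm{d^k}$ of \eqref{eq:descent_0} (which uses only full rank of $C_k$), the first-order Taylor expansion \eqref{eq:descent_1} of the smooth part, the identity \eqref{eq:descent_2} obtained from the second-order system \eqref{eq:direction}, and the bound \eqref{eq:descent_3}, whose derivation uses only that $\Pi$ and $\Gamma^k=\gamma C^\top D^kC$ are symmetric positive semidefinite. The single change occurs when estimating $-s\,d^{k\top}B^kd^k$: in place of $-s\,d^{k\top}B^kd^k\leq-s\hat c\norm{d^k}_2^2$, Assumption~\ref{h:Bk2} now yields $-s\,d^{k\top}B^kd^k\leq s\,|d^{k\top}B^kd^k|\leq s\hat C\norm{d^k}_2^2$. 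Propagating this together with the unchanged bounds \eqref{eq:descent_4} and \eqref{eq:desced_3} to the analogue of \eqref{eq:descent_5}, writing $\Sigma^k:=\sum_{i:\,|\langle c_i,x^k\rangle|\leq1/\gamma}\langle c_i,d^k\rangle^2$, and bounding the two sums on the right of \eqref{eq:desced_3} respectively by $\tfrac12\norm{|\nu^k|+\beta(|\xi^k|+n|\eta^k|)}_2^2$ and (enlarging the index set, since $\langle c_i,x^k\rangle=0$ implies $|\langle c_i,x^k\rangle|\leq1/\gamma$) by $\tfrac12\Sigma^k$, I arrive at
\[
\varphi(x^{k+1})-\varphi(x^k)\leq s\hat C\norm{d^k}_2^2-\gamma s\beta\,\Sigma^k+\tfrac{s}{2}\norm{|\nu^k|+\beta(|\xi^k|+n|\eta^k|)}_2^2+\tfrac{s}{2}\,\Sigma^k+o(s\norm{d^k}).
\]

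The second step is to choose $\gamma=\gamma_{k+1}$ large enough that the curvature term $-\gamma s\beta\Sigma^k$ coming from the generalized Hessian of the nonsmooth part dominates the other three non-$o(\cdot)$ terms simultaneously. Multiplying the prescribed bound \eqref{eq:gamma_k} by $\beta\Sigma^k>0$ and using $\Sigma^k\geq\min\langle c_i,d^k\rangle^2$ over the nonzero active terms gives
\[
\gamma_{k+1}\beta\Sigma^k>\hat C\norm{d^k}_2^2+\tfrac12\norm{|\nu^k|+\beta(|\xi^k|+n|\eta^k|)}_2^2+\tfrac12\Sigma^k,
\]
so the sum of the four terms in the previous display equals $-s\rho_k$ for some $\rho_k>0$; here the hypothesis \eqref{eq:coercivity2}, which guarantees $\Sigma^k\geq\tilde C\norm{d^k}_2^2>0$, is what makes $\Sigma^k$ non-degenerate (so that \eqref{eq:gamma_k} is well posed when $d^k\neq0$) and comparable to $\norm{d^k}_2^2$. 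Hence $\varphi(x^{k+1})-\varphi(x^k)\leq-s\rho_k+o(s\norm{d^k})$, which is strictly negative once $s$ is small enough, so the backtracking line search of Algorithm~\ref{alg:A1} returns a step $s_k$ with $\varphi(x^{k+1})<\varphi(x^k)$, as claimed.

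The main obstacle, and the only genuinely new ingredient relative to Theorem~\ref{t:monotonicity}, is that the ``good'' term $-\hat c\norm{d^k}_2^2$ previously supplied for free by positive definiteness of $B^k$ has vanished and, worse, the $B^k$ contribution now has the wrong sign. The whole descent must therefore be carried by the term $\gamma\Gamma^k=\gamma C^\top D^kC$ attached to the nonsmooth penalizer; this is exactly why the coercivity hypothesis \eqref{eq:coercivity2} --- which makes $\Sigma^k$ comparable to $\norm{d^k}_2^2$ --- is indispensable, and why \eqref{eq:gamma_k} now carries the extra summand $2\hat C\norm{d^k}_2^2/\Sigma^k$, precisely the amount of added curvature needed to overpower the indefinite part of $B^k$. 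One minor point to settle in passing is that the minimum and the sum $\Sigma^k$ in \eqref{eq:gamma_k} range over compatible index sets: whenever $d^k\neq0$, the active set $\{i:\,|\langle c_i,x^k\rangle|\leq1/\gamma\}$ does contain an index with $\langle c_i,d^k\rangle\neq0$ --- which is immediate from \eqref{eq:coercivity2}, since $\Sigma^k=0$ would force $d^k=0$.
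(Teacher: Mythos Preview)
Your proof is correct and follows essentially the same route as the paper: both reuse the estimates \eqref{eq:descent_0}--\eqref{eq:descent_4} and \eqref{eq:desced_3} from Theorem~\ref{t:monotonicity} verbatim, replace the lower bound $-s\hat c\norm{d^k}^2$ by the upper bound $+s\hat C\norm{d^k}^2$ via Assumption~\ref{h:Bk2}, and then absorb all positive terms into $-\gamma s\beta\,\Sigma^k$ using the enlarged choice \eqref{eq:gamma_k} of $\gamma$ together with \eqref{eq:coercivity2}. The paper lands on the slightly sharper display $\varphi(x^{k+1})-\varphi(x^k)\leq -\tfrac{s}{2}\Sigma^k+o(s\norm{d^k})$ rather than your $-s\rho_k+o(s\norm{d^k})$, but the argument and the conclusion are the same.
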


\begin{proof}
	Following the same arguments and notation  of the proof of Theorem \ref{t:monotonicity}, we have that

	\begin{align}
	\varphi(x^{k+1}) - \varphi(x^{k})  \leq &-s{d^k}^\top \l[ B^k + \beta \Gamma^k \r]d^k  -  s \beta \sum_{\substack{i\in \mathcal{S}_k %\\ \langle c_i, x^k \rangle=0
}
}
\xi_i^k\langle c_i, d^k \rangle\nonumber \\
	& + s\beta\sum_{i\not \in \mathcal{S}_k} |\langle c_i, \Pi d^k) \rangle|-    \,\nabla f(x^k)^\top \Pi(x^k+sd^k) + o(s \norm{d^k})\nonumber\\
\leq & s\hat{C} \norm{d^k}^2 - \gamma s \beta \sum_{i: |\langle c_i, x^k \rangle|\leq 1/\gamma} \langle c_i,d^k \rangle^2 + o(s \norm{d^k}) \nonumber \\
&\qquad - s \beta \sum_{\substack{i\in \mathcal{S}_k %\\ \langle c_i, x^k \rangle=0
}
}
\xi_i^k\langle c_i, d^k \rangle
	 + s\beta |\mathcal{S}^C_k| |\langle c_{i^*}, \Pi d^k) \rangle|-    \,\nabla f(x^k)^\top \Pi(x^k+sd^k) , \label{eq:descent_6}
	\end{align}
By the estimate \eqref{eq:desced_3} and \eqref{eq:coercivity2}  we get

\begin{align}
	\varphi(x^{k+1}) - \varphi(x^{k})
	\leq
& s\hat{C} \norm{d^k}^2 - \gamma s \beta \sum_{i: |\langle c_i, x^k \rangle|\leq 1/\gamma} \langle c_i,d^k \rangle^2 + o(s \norm{d^k}) \nonumber \\
&\qquad + s \Big( \sum_{ \substack{i\in\mathcal{S}_k \\ \langle c_i,x^k\rangle=0 }}\frac12(|\nu^k_i|+\beta (|\xi^k_i|+n|\eta_i^k|))^2 +  \frac12|\langle c_i,d^k\rangle|^2 \Big) \nonumber \\
&\leq -  \frac{s}{2} \sum_{i: |\langle c_i, x^k \rangle|\leq 1/\gamma} \langle c_i,d^k \rangle^2 + o(s \norm{d^k}).
	\end{align}
Finally, the right--han side of the last relation  is negative for sufficiently small $s$.
\end{proof}

\begin{definition}
	We will say that a function $f$ is a KL--function if $f$ satisfies the Kurdyka--\L ojasiewicz inequality, that is: for every $y \in \reals$ and for every bounded subset $E \subset \reals^m$, there exist three constants $\kappa>0$, $\zeta >0$ and $\theta \in [0,1[$ such that for all $z \in \partial f (x)$
and every $x \in E$ such that $|f(x) - y| \leq \zeta$, it follows that
\begin{equation}
	\kappa |f(x) - y|^\theta \leq \norm{z}_2, \label{eq:KL}
\end{equation}
with the convention $0^0 =0$.
\end{definition}

\begin{theorem}\label{t:convergence} Suppose that Assumptions \ref{h:f}--\ref{h:B_spd} are satisfied and that $\varphi$ is a KL--function (i.e. satisfies the Kurdyka--\L ojasiewicz condition). Then, the sequence $\{x^k\}_{k\in \mathbb N}$ generated by Algorithm 1 converges to a point $\bar x$ such that  $0 \in \nabla f(\bar x)+ \beta \, C^\top \partial  \|\cdot\|_1(C\bar x)$.
\end{theorem}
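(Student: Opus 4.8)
The plan is to follow the standard Attouch–Bolte–Svaiter scheme for descent methods on KL functions, which requires establishing three properties for the sequence $\{x^k\}$: (H1) a \emph{sufficient decrease} estimate of the form $\varphi(x^{k+1}) \le \varphi(x^k) - a\norm{x^{k+1}-x^k}^2$ for some $a>0$; (H2) a \emph{relative error} bound, i.e. the existence of $w^{k+1}\in\partial\varphi(x^{k+1})$ with $\norm{w^{k+1}} \le b\norm{x^{k+1}-x^k}$ for some $b>0$; and (H3) the existence of a subsequence $x^{k_j}\to \tilde x$ with $\varphi(x^{k_j})\to\varphi(\tilde x)$. Once (H1)--(H3) hold and $\varphi$ is KL, the abstract convergence theorem yields that $\{x^k\}$ has finite length (hence is Cauchy and converges to some $\bar x$) and that $0\in\partial\varphi(\bar x)$, which is exactly $0\in\nabla f(\bar x)+\beta C^\top\partial\norm{\cdot}_1(C\bar x)$.

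First I would upgrade the descent estimate of Theorem \ref{t:monotonicity}. That theorem produces $\varphi(x^{k+1})-\varphi(x^k)\le -s\hat c\norm{d^k}^2 + o(s\norm{d^k})$; combined with the backtracking linesearch rule \eqref{eq:line search} and Assumption \ref{h:B_spd} (so that $s_k$ stays bounded away from $0$, by the usual Armijo argument using the locally Lipschitz gradient of $f$ and the quadratic term), this gives a uniform $\varphi(x^{k+1})-\varphi(x^k)\le -a\norm{d^k}^2$. Since by \eqref{eq:descent_0} we also have $\norm{x^{k+1}-x^k}\le s_k(1+c)\norm{d^k}\le \tilde a\norm{d^k}$, one rewrites this as (H1): $\varphi(x^{k+1})\le\varphi(x^k)-a'\norm{x^{k+1}-x^k}^2$. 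Summing over $k$ and using that $\varphi$ is bounded below (Assumption \ref{h:f}(i)) gives $\sum_k\norm{x^{k+1}-x^k}^2<\infty$, hence $d^k\to 0$ and $x^{k+1}-x^k\to 0$; coercivity of $\varphi$ (Assumption \ref{h:f}(iii)) then bounds $\{x^k\}$, so a convergent subsequence exists and (H3) follows from continuity of $\varphi$.

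For (H2), I would construct the subgradient explicitly. From the direction system \eqref{eq:direction}, $\nabla f(x^k)+\beta C^\top\xi^k = -[B^k+\beta\Gamma^k]d^k$, and $\xi^k\in\partial\norm{\cdot}_1(Cx^k)$ — but we need an element of $\partial\varphi$ \emph{at} $x^{k+1}$. The natural candidate is $w^{k+1}=\nabla f(x^{k+1})+\beta C^\top\zeta^{k+1}$ for a suitable $\zeta^{k+1}\in\partial\norm{\cdot}_1(Cx^{k+1})$, and one estimates $\norm{w^{k+1}}$ by adding and subtracting $\nabla f(x^k)+\beta C^\top\xi^k$, using the local Lipschitz continuity of $\nabla f$ on the bounded set containing the iterates, the bound $\norm{[B^k+\beta\Gamma^k]d^k}\le (K+\beta\norm{\Gamma^k})\norm{d^k}$, and a control on $\norm{C^\top(\zeta^{k+1}-\xi^k)}$ coming from the projection step (which forces $\langle c_i,x^{k+1}\rangle=0$ precisely on $\mathcal S_k$, so the sign pattern changes only on coordinates where $\langle c_i,\cdot\rangle$ is small along the step). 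This, together with $\norm{d^k}\le C\norm{x^{k+1}-x^k}$ — which needs $s_k$ bounded \emph{below} and the projection to be nonexpansive — gives $\norm{w^{k+1}}\le b\norm{x^{k+1}-x^k}$.

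The main obstacle I anticipate is controlling the boundedness of $\Gamma^k$ and of the subgradient selection across iterations. The parameter $\gamma_{k+1}$ in \eqref{eq:gamma_k} is chosen per-iteration and may grow, so $\Gamma^k=\gamma_k C^\top D^k C$ need not be uniformly bounded in $k$; this threatens both the lower bound on the linesearch step $s_k$ and the relative-error constant $b$ in (H2). The plan is to argue that near convergence $d^k\to 0$ forces the active set in \eqref{eq:Gamma} to stabilize and the troublesome terms $\langle c_i,d^k\rangle$ to be controlled in a way that keeps $\gamma_k\norm{d^k}$ bounded, or alternatively to impose (as is standard in this literature) an a priori upper bound $\gamma_k\le\gamma_{\max}$ compatible with \eqref{eq:gamma_k} for $k$ large; I would handle this by combining Remark \ref{r:s_mall} with the summability $\sum\norm{d^k}^2<\infty$ to show the sign-change set $\mathcal S_k$ eventually stabilizes, after which the analysis reduces to a fixed subspace and the constants become uniform. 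Once this uniformity is secured, (H1)--(H3) are in place and the KL machinery of \cite{attouch2009} closes the argument.
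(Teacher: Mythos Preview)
Your plan follows the same Attouch--Bolte KL scheme as the paper, but the paper's execution of the relative-error step is considerably simpler than your (H2). Rather than constructing a subgradient $w^{k+1}\in\partial\varphi(x^{k+1})$ and wrestling with $\norm{C^\top(\zeta^{k+1}-\xi^k)}$ across the projection, the paper applies the KL inequality \emph{at $x^k$} using the subgradient $\nabla f(x^k)+\beta C^\top\xi^k\in\partial\varphi(x^k)$ that comes for free from the direction system \eqref{eq:direction}: one gets $\kappa|\varphi(x^k)-\varphi_\infty|^\theta\le\norm{\nabla f(x^k)+\beta C^\top\xi^k}=\norm{[B^k+\beta\Gamma^k]d^k}\le C\norm{d^k}$ directly. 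Combined with the sufficient decrease $\mu\norm{d^k}^2\le\varphi(x^k)-\varphi(x^{k+1})$ (also phrased in terms of $d^k$, not $x^{k+1}-x^k$), this yields summability of $\norm{d^k}$ by the standard KL telescoping, and then \eqref{eq:descent_0} makes $\{x^k\}$ Cauchy; closedness of the graph of $\nabla f+\beta C^\top\partial\norm{\cdot}_1(C\cdot)$ gives stationarity of the limit. This shortcut avoids your most delicate step entirely. Your concern about the uniform boundedness of $\Gamma^k=\gamma_k C^\top D^k C$ is, however, well founded: the paper's estimate $\norm{[B^k+\beta\Gamma^k]d^k}\le C\norm{d^k}$ tacitly requires $\gamma_k$ bounded too, and the paper does not address this point, so the gap you identify is real and is present in the paper's own argument as well.
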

\begin{proof}
The proof of this convergence result is analogous to the proof of Theorem 2 in \cite{dlrlm07}. Indeed, notice that the sequence $\{x^k\}_{k\in \mathbb N}$ lies in the level set $\{x: \varphi(x)\leq\varphi(x^0) \}$, which in view of Assumption \eqref{h:f} is compact. Moreover, by Theorem \ref{t:1}, for $s^k$ sufficiently small, there exists $\mu>0$ such that the sequence $\{\varphi(x^k)\}_{k\in \mathbb N}$ enjoys the property:
\begin{equation}\label{eq:conv_1}
	\mu \norm{d^k}_2^2 \leq f(x^k) + \beta \norm{Cx^k}_1 - f(x^{k+1}) -  \beta \norm{Cx^{k+1}}_1,
\end{equation}
and $\varphi(x^k)$ converges to some value $\varphi_\infty$ as $k \arrow \infty$. By using the Kurdyka--\L ojasiewicz condition and Assumption \ref{h:B_spd}, there exist $\kappa>0$ and $\theta \in [0,1)$ such that
\begin{equation}\label{eq:conv_2}
 \kappa |\varphi(x^k) -\varphi_\infty|^\theta 	\leq \norm{ \nabla f(x^k) + \beta C^\top \xi^k}_2\leq \frac{\hat C}{\kappa}\norm{d^k}_2, \quad \forall \,\xi  \in \partial \l(\beta \|\cdot\|_1 \r)(C x^k) .
\end{equation}
holds. Therefore, majoring \eqref{eq:conv_1} using \eqref{eq:conv_2} it can be concluded the summability of the sequence $\{\norm{d^k}\}_{k\in \mathbb N}$. Which in turn, by \eqref{eq:descent_0}, implies that  $\{x^k\}_{k\in \mathbb N}$ is a Cauchy sequence and thus convergent. Let us denote its limit by $\bar x$.

Since $\nabla f(x^k) + \beta C^\top \xi^k   \in \nabla f(x^k) + \beta C^\top \partial \norm{\cdot}_1) (Cx^k)$ then we have
$$(x^k,  \nabla f(x^k) + \beta C^\top \xi^k) \in \text{Graph} (\nabla f+ \beta C^\top \partial \norm{\cdot}_1 (C \cdot) ) $$
Finally, using \eqref{eq:conv_2} and taking the limit $k\arrow \infty$ we obtain
\begin{equation*}
(x^k, \nabla f(x^k) + \beta C^\top \xi^k ) 	 \arrow (\bar x, 0) \quad\text{as} \quad k \arrow +\infty.
\end{equation*}
Hence $(\bar x, 0)$ belongs to  $\text{Graph} ( \nabla f + \partial (\beta \norm{\cdot}_1)  )$ due to its closedness which is equivalent to the relation $0 \in \nabla f(\bar x)+ \partial (\beta \norm{\cdot}_1) (\bar x)$.
\end{proof}

\begin{theorem}[Rate of convergence] Let Assumptions \ref{h:f}--\ref{h:B_spd} hold and assume also that $\varphi$ is a KL--function with \L ojasiewicz exponent $\theta \in (0,1)$. Let $\{ x^k\}_{k\in\mathbb{N}}$ be a sequence generated by Algorithm \ref{alg:A1}, converging to a local solution $\bar x$. Then, the following rates hold:
\begin{enumerate}[(i)]
	\item If $\theta \in (0,\frac12)$, then there exist $c>0$ and $\tau \in [0,1)$ such that $$\norm{x^k-\bar x} \leq c \tau^k$$.
	\item If $\theta \in (\frac12,1)$, then there exist $c>0$  such that $$\norm{x^k-\bar x} \leq c k^{-\frac{1-\theta}{2\theta-1}}.$$
\end{enumerate}

\end{theorem}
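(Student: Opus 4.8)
The plan is to adapt the abstract convergence-rate argument of \cite{attouch2009} (in the spirit of the analysis in \cite{dlrlm07}) for descent schemes that satisfy a sufficient-decrease inequality together with a relative-error bound on the subgradient, feeding it the three estimates already obtained in the proof of Theorem~\ref{t:convergence}. Writing $r_k := \varphi(x^k)-\varphi_\infty$, these are: the sufficient decrease $\mu\norm{d^k}_2^2 \le r_k - r_{k+1}$ from \eqref{eq:conv_1}; the \L ojasiewicz bound $\kappa\, r_k^{\theta} \le \tfrac{\hat C}{\kappa}\norm{d^k}_2$ from \eqref{eq:conv_2}; and the displacement estimate $\norm{x^{k+1}-x^k}_2 \le (1+c)\norm{d^k}_2$ coming from \eqref{eq:descent_0} once the backtracking step sizes are bounded above (say $s_k\le 1$). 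By Theorem~\ref{t:monotonicity} the scalar sequence $(r_k)$ is nonnegative and nonincreasing, and since $x^k\to\bar x$ with $\varphi$ continuous we have $r_k\searrow 0$. If $\norm{d^k}_2=0$ for some $k$ then $x^k$ is a critical point and the sequence is eventually constant, so we may assume $\norm{d^k}_2>0$ for all $k$.

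First I would convert a decay rate for $(r_k)$ into one for $\norm{x^k-\bar x}_2$. By concavity of $t\mapsto t^{1-\theta}$ on $[0,\infty)$, combined with the first two estimates,
\[
r_k^{1-\theta}-r_{k+1}^{1-\theta}\;\ge\;(1-\theta)\,r_k^{-\theta}(r_k-r_{k+1})\;\ge\;\frac{(1-\theta)\mu\kappa^2}{\hat C}\,\norm{d^k}_2 ,
\]
so that $\norm{d^k}_2\le C_1\big(r_k^{1-\theta}-r_{k+1}^{1-\theta}\big)$ with $C_1:=\hat C/((1-\theta)\mu\kappa^2)$; telescoping and using $r_k\to0$ yields $\sum_{j\ge k}\norm{d^j}_2\le C_1\,r_k^{1-\theta}$, hence
\[
\norm{x^k-\bar x}_2\;\le\;\sum_{j\ge k}\norm{x^{j+1}-x^j}_2\;\le\;(1+c)\,C_1\,r_k^{1-\theta}.
\]
Thus it suffices to estimate $r_k$; combining $r_k-r_{k+1}\ge\mu\norm{d^k}_2^2$ with $\norm{d^k}_2\ge(\kappa^2/\hat C)\,r_k^{\theta}$ gives the scalar recursion
\[
r_k-r_{k+1}\;\ge\;M\,r_k^{2\theta},\qquad M:=\mu\kappa^4/\hat C^{\,2}.
\]

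For part (i), $\theta\in(0,\tfrac12)$: since $r_k\searrow0$ we eventually have $r_k\le1$, hence $r_k^{2\theta}\ge r_k$, so $r_{k+1}\le(1-M)r_k$; discarding the trivial case $M\ge1$ (in which $r_k$ vanishes in finitely many steps) one gets $r_k\le c_0\,(1-M)^k$, and plugging this into the bound above gives $\norm{x^k-\bar x}_2\le c\,\tau^k$ with $\tau:=(1-M)^{1-\theta}\in[0,1)$. For part (ii), $\theta\in(\tfrac12,1)$: put $q:=2\theta-1>0$ and apply the tangent inequality for the convex map $t\mapsto t^{-q}$ at $t=r_k$ to $r_k-r_{k+1}\ge M\,r_k^{\,q+1}$; this yields $r_{k+1}^{-q}-r_k^{-q}\ge qM$, hence $r_k^{-q}\ge r_0^{-q}+kqM\ge kqM$, i.e. $r_k\le(qM)^{-1/q}k^{-1/(2\theta-1)}$. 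Substituting once more into $\norm{x^k-\bar x}_2\le(1+c)C_1\,r_k^{1-\theta}$ produces the claimed rate $\norm{x^k-\bar x}_2\le c\,k^{-(1-\theta)/(2\theta-1)}$.

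The argument is essentially routine once \eqref{eq:conv_1}, \eqref{eq:conv_2} and \eqref{eq:descent_0} are in hand, the only nontrivial ingredients being the two scalar-sequence comparisons just used. The points demanding care are (a) that the constants $\mu,\kappa,\hat C,c,M$ must be independent of $k$ — which follows from Assumption~\ref{h:B_spd} and from the fact that the iterates stay in the compact sublevel set $\{x:\varphi(x)\le\varphi(x^0)\}$, where $\nabla f$ is uniformly Lipschitz — and (b) the behaviour of the concavity/convexity estimates as $r_k\searrow0$, dealt with by splitting off the finite-termination case $r_k=0$. The main, though mild, obstacle is (a): ensuring that the line-search step sizes $s_k$ are uniformly bounded, so that the telescoping estimates carry $k$-uniform prefactors; everything else follows by the manipulations sketched here.
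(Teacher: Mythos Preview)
Your proposal is correct and essentially self-contained, but it takes a somewhat different route from the paper. You work directly with the scalar residual $r_k=\varphi(x^k)-\varphi_\infty$: combining the sufficient-decrease and KL bounds you derive the one-step recursion $r_k-r_{k+1}\ge M\,r_k^{2\theta}$, analyze it explicitly (linear contraction when $2\theta<1$, telescoping growth of $r_k^{-(2\theta-1)}$ when $2\theta>1$), and only then convert to a rate on $\norm{x^k-\bar x}$ via the tail bound $\norm{x^k-\bar x}\lesssim r_k^{1-\theta}$. The paper instead follows the original Attouch--Bolte template more literally: it bounds $\norm{x^{k+1}-x^k}^2$ by a ``desymmetrized'' product $(\varphi(x^k)^{1-\theta}-\varphi(x^{k+1})^{1-\theta})\cdot\norm{x^k-x^{k-1}}$---using along the way the minimum-norm property of $\xi^k$ to pass to $\xi^{k-1}$, and then the Lipschitz continuity of $\nabla f$---sums the resulting inequality to obtain a recursion for the tails $\Delta^n=\sum_{j\ge n}\norm{x^{j+1}-x^j}$ of the form $\Delta^n\le M(\Delta^{n-1}-\Delta^n)^{(1-\theta)/\theta}+(\Delta^{n-1}-\Delta^n)$, and finally \emph{cites} \cite{attouch2009} for the concluding scalar analysis. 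Your version is cleaner: it avoids the two-index ($k$ versus $k{-}1$) coupling and the algorithm-specific feasibility argument for $\xi^{k-1}$, and it spells out the scalar-sequence step rather than outsourcing it. The paper's route, on the other hand, stays closer to the abstract framework designed for methods whose subgradient bound is naturally expressed in terms of the \emph{previous} iterate. Both approaches rest on the same three estimates \eqref{eq:conv_1}, \eqref{eq:conv_2}, \eqref{eq:descent_0} and deliver identical rates.
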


\begin{proof}
	We follow the ideas from \cite{attouch2009}. From \eqref{eq:descent_0} and the quadratic growth \eqref{eq:qgrowth}, for sufficiently small $s$, there is a positive constant $c$ such that
\begin{align}\label{eq:qgrowth1}
\norm{x^{k+1}-x^k}^2_2 \leq c\norm{d^k}^2   \leq &  \varphi(x^k) -\varphi(x^{k+1}),
\end{align}
Without loss of generality, we assume that $\varphi(\bar x) =0$ (we can always replace $\varphi(\cdot)$ by $\varphi(\cdot) - \varphi(\bar x)$ ) and by multiplying  relation \eqref{eq:qgrowth1} by $\varphi(x^k)^{-\theta}$%\norm{\nabla f(x^k)+\beta C^\top \xi^k}
and using the fact that the real function $\reals_+\ni t \mapsto t^{1-\theta}$ is a concave differentiable function
\begin{align}%\label{eq:qgrowth2}
\norm{x^{k+1}-x^k}^2_2 \varphi(x^k)^{-\theta}   \leq &  (\varphi(x^k) -\varphi(x^{k+1}))\varphi(x^k)^{-\theta}\nonumber\\
\leq & \frac{1}{1-\theta}(\varphi(x^k)^{1-\theta} -\varphi(x^{k+1})^{1-\theta}). \nonumber
\end{align}
On the other hand, $\varphi$ is a KL--function  thus, from the last relation, we get
\begin{align}
\norm{x^{k+1}-x^k}^2_2
\leq & \frac{1}{1-\theta}(\varphi(x^k)^{1-\theta} -\varphi(x^{k+1})^{1-\theta})\varphi(x^k)^{\theta} \nonumber\\
\leq & \frac{1}{1-\theta}(\varphi(x^k)^{1-\theta} -\varphi(x^{k+1})^{1-\theta})\norm{\nabla f(x^k)+\beta C^\top \xi^k}_2.\label{eq:qgrowth3}
\end{align}
Fhurther, $\xi^k$ corresponds to the minimum norm subgradient solving  \eqref{eq:od}; therefore, by feasibility of $\xi^{k-1}$ we have that  $\norm{\nabla f(x^k)+\beta C^\top \xi^k}_2 \leq \norm{\nabla f(x^k)+\beta C^\top \xi^{k-1}}_2$ which can be inserted in \eqref{eq:qgrowth3} and combined with  \eqref{eq:direction} and Assumption \ref{h:f} to obtain that
\begin{align}
\norm{x^{k+1}-x^k}^2_2
\leq & \frac{1}{1-\theta}(\varphi(x^k)^{1-\theta} -\varphi(x^{k+1})^{1-\theta})\norm{\nabla f(x^k)+\beta C^\top \xi^{k-1}}_2\nonumber \\
\leq &\frac{1}{1-\theta}(\varphi(x^k)^{1-\theta} -\varphi(x^{k+1})^{1-\theta})\Big(\norm{\nabla f(x^k)-\nabla f(x^{k-1})}_2 \nonumber\\
&\quad+ \norm{\nabla f(x^{k-1})+\beta C^\top \xi^{k-1}}_2 \Big). \nonumber\\
\leq &\frac{1}{1-\theta}(\varphi(x^k)^{1-\theta} -\varphi(x^{k+1})^{1-\theta})(L_f\norm{x^k-x^{k-1}}_2 + \frac{\hat C}{\kappa}\norm{d^{k-1}}_2 ). \nonumber
\end{align}
As before, we invoke Remark \ref{r:s_mall} to infer that for sufficiently small $s$ it follows that $\Pi x^{k-1}=0$ then $s\norm{d^{k-1}}_2\leq s\norm{\mathcal{P}d^{k-1}}_2 = \norm{x^k-x^{k-1}+\Pi x^{k-1}  }_{2}\leq  \norm{x^k-x^{k-1}}_2$ which together with the above inequality imply that there exist a constant $c>0$ such that
\begin{align}
2\norm{x^{k+1}-x^k}_2
\leq &2\l(\frac{c}{1-\theta}(\varphi(x^k)^{1-\theta} -\varphi(x^{k+1})^{1-\theta})\r)^{\frac12}\norm{x^k-x^{k-1}}^{\frac12}_2. \nonumber \\
\leq &\frac{c}{1-\theta}(\varphi(x^k)^{1-\theta} -\varphi(x^{k+1})^{1-\theta}) +  \norm{x^k-x^{k-1}}_2.\nonumber\\
\leq &M_\theta(\varphi(x^k)^{1-\theta} -\varphi(x^{k+1})^{1-\theta}) + \norm{x^k-x^{k-1}}_2,\label{eq:rate1}
\end{align}
where $M_\theta$ is  a positive constant depending on $\theta$. Let us sum \eqref{eq:rate1} over $k$ from $k=n$ up to $N>k$:
\begin{align}
	\sum_{k=n}^N\norm{x^{k+1}-x^k}_2 + \norm{x^{N+1}-x^N}_2
\leq & M_\theta (\varphi(x^n)^{1-\theta} -\varphi(x^{N+1})^{1-\theta}) + \norm{x^n-x^{n-1}}_2,\nonumber
\end{align}
hence, recalling Theorem \ref{t:convergence} that $\{\norm{x^{k+1}-x^k}_2\}_{k\in \mathbb{N}}$ is summable in virtude of the sumability of the sequence $\{\norm{d^k}_2\}_{k\in \mathbb{N}}$ and taking $N \arrow \infty$, we get
\begin{align}
	\sum_{k=n}^\infty\norm{x^{k+1}-x^k}_2
\leq & M_\theta \varphi(x^n)^{1-\theta}  + \norm{x^n-x^{n-1}}_2. \nonumber
\end{align}
The last relation in terms of $\Delta^n:= \sum_{k=n}^\infty\norm{x^{k+1}-x^k}_2 $ can be rewritten as follows:
\begin{align}
	\Delta^n
\leq & M_\theta \varphi(x^n)^{1-\theta}  +  \Delta^{n-1}-\Delta^n. \nonumber\\
\leq & M_\theta \varphi(x^{n-1})^{1-\theta}  +  \Delta^{n-1}-\Delta^n.\label{eq:rate2}
\end{align}
Using again that $\varphi$ is a KL--function, we have from \eqref{eq:KL} and monotonicity that $\varphi(x^{n-1})^{1-\theta} \leq \frac{1}{\kappa}{\norm{\nabla f(x^{n-1}) + \beta C^\top \xi^{n-1}}_2}^{\frac{1-\theta}{\theta}} $. Thus, observing that $\Delta^{n-1}-\Delta^n = \norm{x^n-x^{n-1}}_2$, we obtain
\begin{align}
	\Delta^n
\leq & \frac{M_\theta}{\kappa}{\norm{\nabla f(x^{n-1}) + \beta C^\top \xi^{n-1}}_2}^{\frac{1-\theta}{\theta}}    +  \Delta^{n-1}-\Delta^n. \nonumber\\
\leq & M (\Delta^{n-1}-\Delta^n)^{\frac{1-\theta}{\theta}}+\Delta^{n-1}-\Delta^n, \label{eq:rate3}
\end{align}
where $M$ is a positive constant. Here, we rely on the analysis of a sequence satisfying relation \eqref{eq:rate3} done in \cite[pg. 13--15]{attouch2009} henceforth  (i) and (ii) hold.
\end{proof}

\section{Numerical experiments}
In this section we carry out some numerical experiments to show the performance of the proposed algorithm. Three application examples of the generalized 1--norm penalization are considered in order to illustrate the type of problems that can be handled with our algorithm.

The algorithm was implemented in Matlab. The \eqref{eq:od} problem of step 3 was solved by using \texttt{quadprog} package from the optimization toolbox whereas the linear system \eqref{eq:direction} of step 4 was solved using direct methods or iterative methods, depending on the matrix of system \eqref{eq:direction}, see experiments below.  In step 6 we implemented the line--search using a projected backtracking algorithm, by checking condition \eqref{eq:line search}. For the stopping criteria we use a given tolerance for the difference of consecutive values for the approximated solution and its corresponding costs. In the numerical experiments we compare our method with different algorithms designed specifically for the problem structure under consideration.

\subsection{Anisotropic total variation in function spaces}
We consider the following simplified version of an anisotropic viscoplastic fluid flow model:
\begin{equation}
\label{anisotrop}	\min_{ u  \in H_0^1(\om)} \frac12\int_\om |\nabla u|^2 dx - \int_\om z u \,dx + \beta\int_{\om}| \nabla (u)|_1 \,dx.
\end{equation}
After discretizing using finite differences, the infinite-dimensional problem is reformulated as an energy minimization problem of the form \eqref{eq:P}. Hence, the regular part $f(u) = \frac12 u^\top A u - b^\top u $ of our minimization problem is written using the matrix $A$ associated to the discrete laplacian and $b$ is the vector corresponding to the discretization of the forcing term $z$.
%\begin{center}
\begin{figure}[ht!]
%\centering
\begin{subfigure}{0.45\textwidth}
\includegraphics{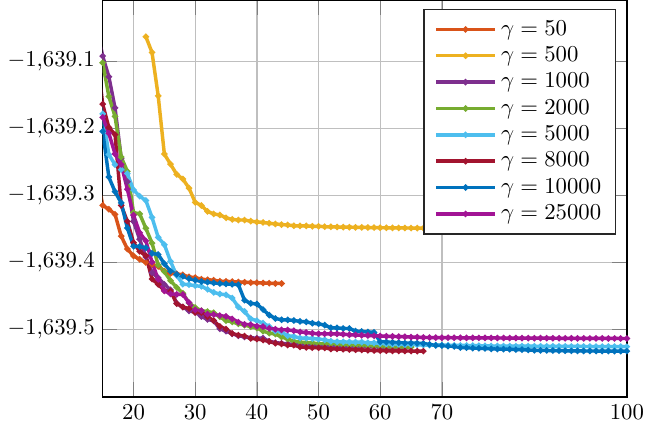}
%\hfill
%\input{hubreg0.tex}
%
\caption{Evolution of the cost function along the iterations, for different values of  $\gamma$.}
\end{subfigure}
%\\
\hfill
\begin{subfigure}{0.45\textwidth}
\includegraphics{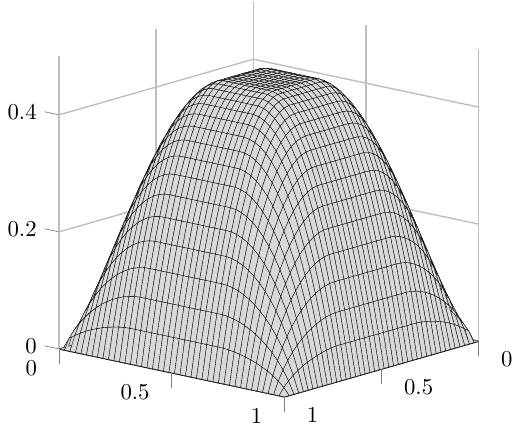}
\hfill
\caption{Solution for problem \eqref{anisotrop} for $\beta=0.5$ and parameter $\gamma = 25000$}
\end{subfigure}
\caption{Anisotropic viscoplastic flow}
\label{fig:ga}
 \end{figure}
% \end{center}

We observe in Table \ref{t:1} the effect of using the \emph{generalized second--order information} introduced in Section \ref{s:2ndorder}. The cost values of the objective function were computed by varying the regularization parameter $\gamma$ for different values of $\beta$, after 50 iterations of the algorithm. The first row (in red) shows the cost values achieved by the algorithm when no generalized second--order information is utilized for the computation of the descent direction ($\gamma =0$). In this case, we notice that without generalized second-order information the cost is larger in all tests.

In Figure \ref{fig:ga} (A) the evolution of the cost is shown for different values of $\gamma$ and for $\beta=0.5$. The case $\gamma=0$ is excluded from the plot in view of its higher values, see Table \ref{t:1} below. 
\begin{table}[ht!]
\small
\begin{tabular}{l|l|l|l|l|l}
& $\beta =0.1$&  $\beta =0.3$ &  $\beta =0.5$&  $\beta =0.7$ &   $\beta =0.9$\\
\hline
$\gamma=0$& -2640.5471&-2095.9578&-1638.1323&-1258.8208&-946.8398\\\hline
$\gamma=50$& -2640.5586&-2096.427&-1639.4316&-1261.8356&-955.7\\\hline
$\gamma=500$&-2640.5623&\textbf{-2096.4514}&-1639.3464&\textbf{-1261.9043}&\textbf{-956.3495}\\\hline
$\gamma=1000$&-2640.5623&-2096.4502&\textbf{-1639.5237}&-1261.8978&-955.7178\\\hline
$\gamma=2000$&-2640.5623&-2096.252&-1639.521&-1261.5852&-952.9638\\\hline
$\gamma=5000$&-2640.5623&-2096.3521&-1639.515&-1261.3488&-956.0101\\\hline
$\gamma=8000$&-2640.5623&-2096.3442&-1639.527&-1261.2423&-954.4819\\\hline
$\gamma=10000$&\textbf{-2640.5625}&-2096.3564&-1639.4932&-1261.5385&-953.5706\\\hline
%$\gamma=25000$&\textbf{-2640.5625}&-2096.3142&-1639.506&-1261.5549&-955.643\\\hline
\end{tabular}
\caption{Cost function values varying parameters $\gamma$ and $\beta$}
\label{t:1}
\end{table}

Next, we test the importance of the active--set identification strategy described in Section \ref{sec: active identification}. We compare the computing time with respect to an implementation not hacking this strategy. We confirm the efficiency of using this strategy by measuring the computing time for this particular problem, see Table \ref{t:2}.

\begin{table}[ht!]
\small
\begin{tabular}{llllllll}
      & $\beta=0.35$ & $\beta=0.4$ & $\beta=0.45$ & $\beta=0.5$&$\beta=0.7$&$\beta=1$\\ \hline
	%Active--set &  1.16  & 2.06 & 2.92 & 1.92 & 4.73\\ \hline
	%none        & 6.08  & 11.35 & 11.07 & 10.04& 36.52
	Active--set &  0.0018  & 0.0017 & 0.0017 & 0.0016 & 0.0018 & 0.0018 \\ \hline
	none        & 0.0031  & 0.0030 & 0.0031 & 0.0033&  0.0031&  0.0030
\end{tabular}
	\caption{Average time (in seconds) of the numerical solution of system \eqref{eq:direction} with and without the Active--Set strategy during the execution of GSOM algorithm.}
	\label{t:2}
\end{table}

\subsubsection{Numerical aspects of the second--order system} In this example, governed by the objective function {anisotrop} serves us to investigate the numerical efficiency regarding the numerical computation of the descent direction via system \eqref{eq:direction}. The structure of matrix $B^k + \beta \Gamma^k $ is determined by a particular problem and it is important to take it into account when it comes to choosing a linear solver or associated numerical strategies. In this particular example, the matrix $B+\beta\Gamma_k$ is a sparse banded matrix. Further, it is symmetric and positive definite; hence, we experiment with several iterative methods to observe the effect of the choice of the method solving the linear system.

 In Table \ref{t:method}, we compare \textit{direct methods} from Matlab's backslash, the \textit{preconditioned conjugate method} and the \textit{generalized minimum residual}. The last two preconditioned with the incomplete LU factorization (ilu), see \cite{saad2003}. We observe a substancial improvement using iterative methods which are suited for the structure of the matrix $B+\beta\Gamma_k$.
\begin{table}[ht!]
\small
\begin{tabular}{llllllll}
      & $m=1600$ & $m=2500$ & $m=3600$ \\ \hline
	direct & 0.028$\pm 3\times 10^{-6}$  & 0.0760$\pm 3\times10^{-5}$ & 0.180$\pm 1\times10^{-5}$\\ \hline 
	pcg &  0.005$\pm 5\times 10^{-8}$   & 0.0072$\pm 1\times10^{-7}$ & 0.011$\pm 4\times10^{-7}$  \\ \hline
	gmres & 0.017$\pm 1\times 10^{-6}$  & 0.0260$\pm 3\times10^{-6}$ & 0.056$\pm 9\times10^{-6}$ \end{tabular}
	\caption{Average$\pm$variance cpu--time (in seconds) for different linear solvers computing system \eqref{eq:direction} }
	\label{t:method}
\end{table}

\subsection{Image restoration}
%'/Users/pmerino/Dropbox/investigacion/exact_penal/comparaciones/Zhu-2/code_fgfl_aaai14/example_5'
Consider the image deconvolution example of \cite{gong2017}. The aim in this problem is to recover an image out of one convoluted with the random matrix $A$. For instance, this convolution occurs during the camera exposure, producing a blured image. If $x$ is the original image, the contaminated one is modeled by $y = Ax + z$, where $A \in \reals^{n\times n}$ and $z \in \reals^{n}$. The recovering process consists in choosing the image $x$ which best fits the observation and at the same time minimizes the term that computes the differences of each pixel with respect to its neighbors by means of a directed graph $G=\{N,E\}$. Thus, we look for a minimizer of the cost function
\begin{equation*}
f(x) = \frac12 \norm{A x - y}_2^2 + \alpha \norm{x}_1 + %\underbrace{
\beta \sum_{(i,j)\in E } |x_{i}-x_{j}|
%}_{=\| Cu\|_1}
\end{equation*}
Notice that the last function fits in our settings using the incidence matrix $C$, associated to the graph $G$, in order to express the penalizing term as: $ \sum_{(i,j)\in E } |x_{i}-x_{j}| = \norm{Cx}_1$.

In the following example we consider the recovering of an image of size $77\times 77$  from its corrupted observation $y=Ax+b$ with random noise $z$ with standard deviation $\sigma = 0.05$.  Here $A$ is a random (uniformly distributed) convolution matrix of size $2000 \times 5929$.

\begin{figure}[h]
\centering
\begin{subfigure}{0.45\textwidth}
\includegraphics{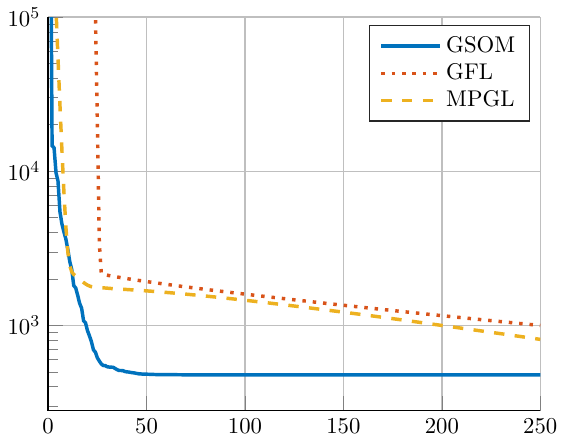}
%\hfill
%\input{hubreg0.tex}
%
\caption{Noise level: $\sigma=0.05$}
\end{subfigure}
\begin{subfigure}{0.45\textwidth}
\includegraphics{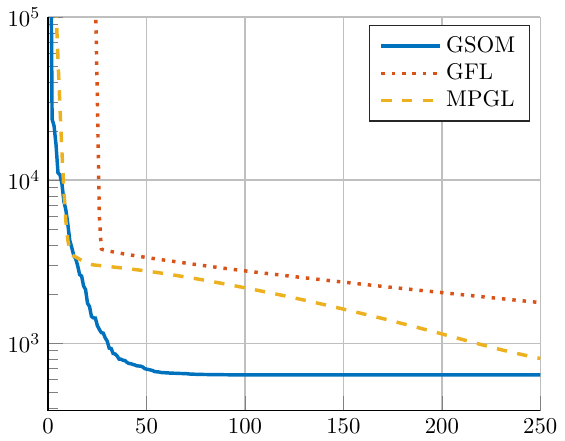}
%\hfill
%\input{hubreg0.tex}
%
\caption{Noise level: $\sigma=0.2$}
\end{subfigure}
\caption{History of the cost function for the 250 iterations}
\end{figure}
Next, we test the Cauchy--denoising model characterized by its non-Gaussian and impulsive property that preserves edges and details of images (see \cite{sciacchitano2015}). The anisotropic version of the discrete Cauchy denoising problem corresponds to the minimization of the nonconvex cost function:
\begin{equation}
\varphi(u)=\sum_{i} \log (a + (u_i - f_i)^2) + \beta \norm{C u}_1,
\end{equation}
where $C$ is the difference operator, $f$ is the observed image perturbed with Cauchy noise and $a>0$ is the scale parameter of the Cauchy distribution. Notice that the nonconvex structure of the optimization problem prevents the application of standard convex methods.

Again, an image of size $77 \times 77$ pixels is considered and a Cauchy--noise is added to the original image according to the formula
\begin{equation}
	f = u + v = u + \xi \frac{\eta_1}{\eta_2},
\end{equation}
suggested in \cite{sciacchitano2015}, where $\xi >0$ provides the noise level, and $\eta_i$, $i=1,2$, follow Gaussian distributions with mean 0 and variance 1. In the next experiment we chose $\xi= 0.01$.

\begin{figure}[h]
\centering
%\begin{subfigure}{0.45\textwidth}
%
\begin{tabular}{cc}
Original & Cauchy--noised  \\
	\includegraphics{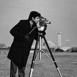} & \includegraphics{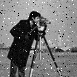}
	\end{tabular}
%\hfill
%\input{hubreg0.tex}
%
%\caption{Recovered images}
%\end{subfigure}
\end{figure}

The following set of pictures shows recovered images for different values of the scale parameter $a$ and the composite sparsity penalizing parameter $\beta$. Both play an important role in the restoration process. Indeed, we observe that larger values of $a$ result in a reduced level of  Cauchy-noise. The same observation applies to higher values of b. As usual, in this type of problems, there is a compromise between the amount of removed noise and the preservation of the details.
\begin{figure}[h]
\centering
%\begin{subfigure}{0.45\textwidth}
%
\begin{tabular}{cccc}
&$\beta=0.5$ & $\beta=0.25$ & $\beta=0.1$  \\
$a=0.9$&	\includegraphics{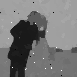} & \includegraphics{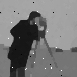}& \includegraphics{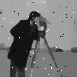}\\
$a=0.6$&	\includegraphics{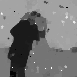} & \includegraphics{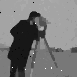}& \includegraphics{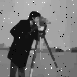}\\
$a=0.3$&	\includegraphics{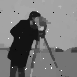} & \includegraphics{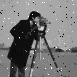}& \includegraphics{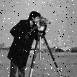}\\
	\end{tabular}
%\hfill
%\input{hubreg0.tex}
%
\caption{Recovered images by GSOM method}
%\end{subfigure}
\end{figure}

Because of the nonconvexity of the Cauchy problem, standard first-order methods cannot be applied. There exist methods designed for nonconvex problems; for instance, the iPiano algorithm, see \cite{ochs2014ipiano}, which is based on a forward-backward splitting with inertial splitting techniques. In each step, iPiano requires the computation of the proximal mapping:
\begin{equation}
\hat x\mapsto \argmin_{x\in\reals^n} \l\{ \frac12\norm{x-\hat x}_2^2 +\alpha \norm{Cx}_1\r\}	 \label{eq:prox_ip}
\end{equation}
which falls in the convex case of \eqref{eq:P}. Consequently, previous methods used in the experiments may be applied for evaluating \eqref{eq:prox_ip}. 

One of the cavils of second--order methods is the memory limitation related to the storage of the matrix of the second-order system \label{eq:direction}. In particular, for image processing and, in general, for applications which involve huge amounts of data, the numerical solution of this system can be prohibitive.  However,  there are a lot of techniques that can be utilized to overcome such inconvenience. 

Aiming to illustrate a practical utilization of such techniques, we give a glimpse of parallel preconditioning. Taking into account the matrix structure for the Cauchy problem, we apply a Bock--Jacoby type preconditioning (\cite[Sec.12.2]{saad2003}) by dividing the system into smaller systems that are solved separately and then gathering each overlapping portion of the solution into a single one. Observe in this case that the matrix $B^k + \beta \Gamma^k$ has a banded sparse structure. We explain the numerical scheme subdividing in two subproblems, but it can be easily extended for an arbitrary number of $p$ partitions.  Assuming $m$ an even integer, and let $l$ be the number of overlaping entries,  we define $A_1$ and $A_2$ by choosing  $a_{i,j}$ for $i,j=1,\ldots,\frac{m}{2}+l$ as the entries of $A_1$ and $a_{i,j}$ for $i,j=\frac{m}{2}+1,\ldots,m$ as the entries of $A_2$. 

The updating scheme is given by
\begin{equation}
x^{k+1} =x^{k}	+ V_1A^{-1}V^T_1 r^k + V_2A_2^{-1}V_2 r^k,
\end{equation}
 where $V_i$ are subspace projections, $A_i$ are block diagonal overlaping submatrices of $A = B^k + \beta \Gamma^k$, and $r^k$ is the residual; i.e. $r^k = Ax^k + [{\nabla}f(x^k) + \beta C^\top\xi(x^k)]$. The inverse--vector multiplication operations are performed using  direct or iterative methods. In our example, for reference we use direct methods of Matlab. In the next table, we can realize how this partition reduces the memory cost, specifically for the system matrix for the Cauchy problem of a picture of $103\times 103$ pixels using 20\% of the partitions as overlapping size. As shown in Figure \ref{fig:schwarz1}, solving the partitioned system additively takes slightly longer time than solving the full system at once. However, it is a low price to pay if memory storage utilization needs to be reduced drastically. We observe this effect in Table \ref{t:additive}.
\begin{table}[h!]
\begin{tabular}{llllllll}
       Partitions & Size of $A_i$ (max) & Storage (Kb) \\ \hline
	- & 10609$\times$10609  &424 \\ \hline 
	$p= 3$ & 4242 $\times$ 4242   & 150 &  \\ \hline
	$p=4$ & 3181$\times$3181 & 84
	%gmres & 0.017$\pm 1\times 10^{-6}$  & 0.0260$\pm 3\times10^{-6}$ & 0.056$\pm 9\times10^{-6}$ 
\end{tabular}
	\caption{Computing time solving system \eqref{eq:direction} using Block--Jacobi}
	\label{t:additive}
\end{table}

\begin{figure}[ht!]
\centering
\includegraphics{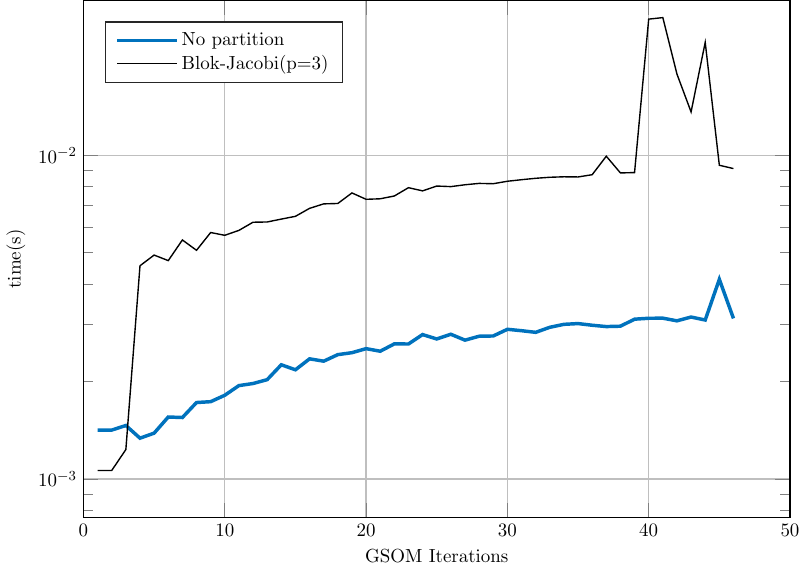}
\caption{Time in each iteration for the computation of $d$}
\label{fig:schwarz1}
\end{figure}

\subsection{Graph trend filtering} In \cite{wang2016trend} the authors introduced a technique of filtering data over graphs, that was applied in the denoising over graphs using the discrete laplacian as sparsity--inducting operator. There, it was showed that better results may be achieved compared to other denoising thechniques. In our setting, $C = \Delta^{(2)}$, where for a integer $k$ the operator $\Delta^{(k)}$ is defined recursively by
\begin{equation}
	\Delta^{(k+1)}:=
	\begin{cases}
		(\Delta^{(1)})^\top \Delta^{(k)}, & \text{ if } k \text{ is odd}, \\
		\Delta^{(1)} \Delta^{(k)}, & \text{ if } k \text{ is even},
	\end{cases}
\end{equation}
where $\Delta^{(1)}$ is the oriented incidence matrix of the graph. Notice that $\norm{\Delta^{(1)}x}_1=\sum_{(i,j)\in E} |x_i-x_j|$, where we denote the graph $G=\{N,E\}$. Therefore, $\Delta^{(2)}={\Delta^{(1)}}^\top \Delta^{(1)}$.

As an example, we consider the denoising of COVID--19 data over a graph corresponding to the Pichincha province of Ecuador connecting adjacent areas or tracts. Hence, each node corresponds to a particular tract of the province territory. The signal data considered in each node consist of the reported number of cases of each tract, denoted by $y$. The noise in this kind of data comes from an imprecise assignments within tracts, counting errors, false positive or negative cases, among other. In our example, we assume that the noise induced by these different sources is normally distributed $y \sim N(x_0,\sigma^2I)$. The sparse graph filtering problem aims to minimize the following cost
\begin{equation}
	f(x) = \frac12 \norm{x-y}_2^2 + \beta_1 \norm{\Delta^{(2)}x}_1 + \beta_2  \norm{x}_1
\end{equation}
Figure \ref{fig:gft1} shows an expected behavior of a first order method (ADMM) compared with a second--order method (GSOM). We observed that GSOM is faster and more precise. However, it requires the solution of a linear system, which may be costly. Nevertheless,  the computational cost can be outstripped by utilizing parallelization and numerical techniques.
\begin{figure}[ht!]
\centering
\includegraphics{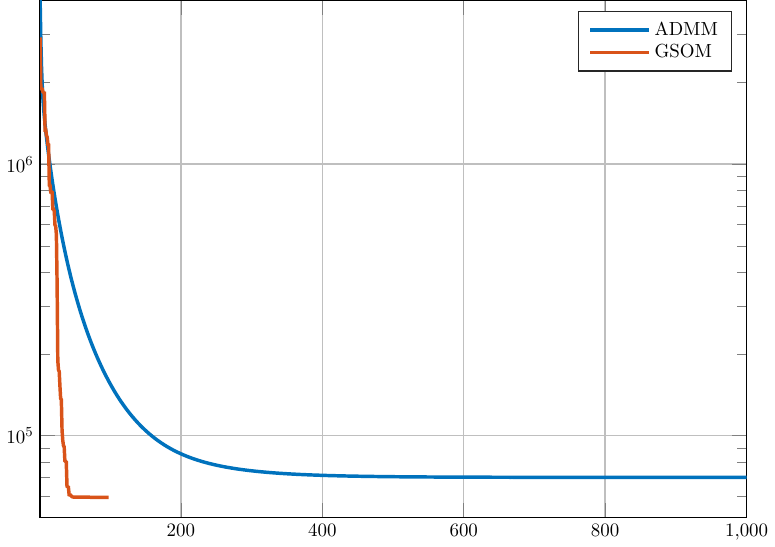}
\caption{Comparison with Fast ADMM algorithm \cite{wang2016trend}}
\label{fig:gft1}
\end{figure}
\begin{figure}[ht!]
\begin{subfigure}{.49\textwidth}
  \includegraphics[width=\textwidth]{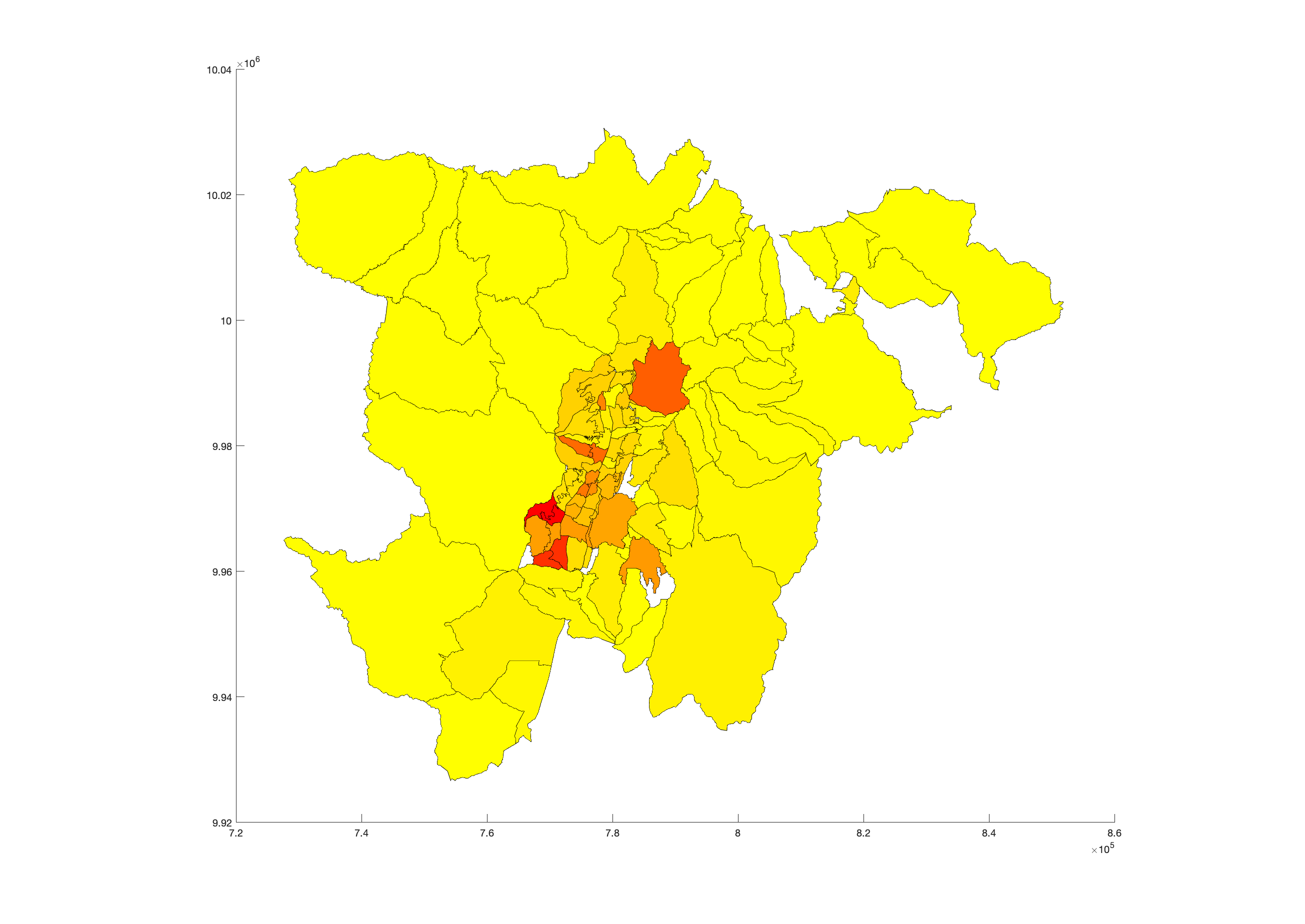}
  \caption{Original graph data}
  \end{subfigure}
  \begin{subfigure}{.49\textwidth}
  \includegraphics[width=\textwidth]{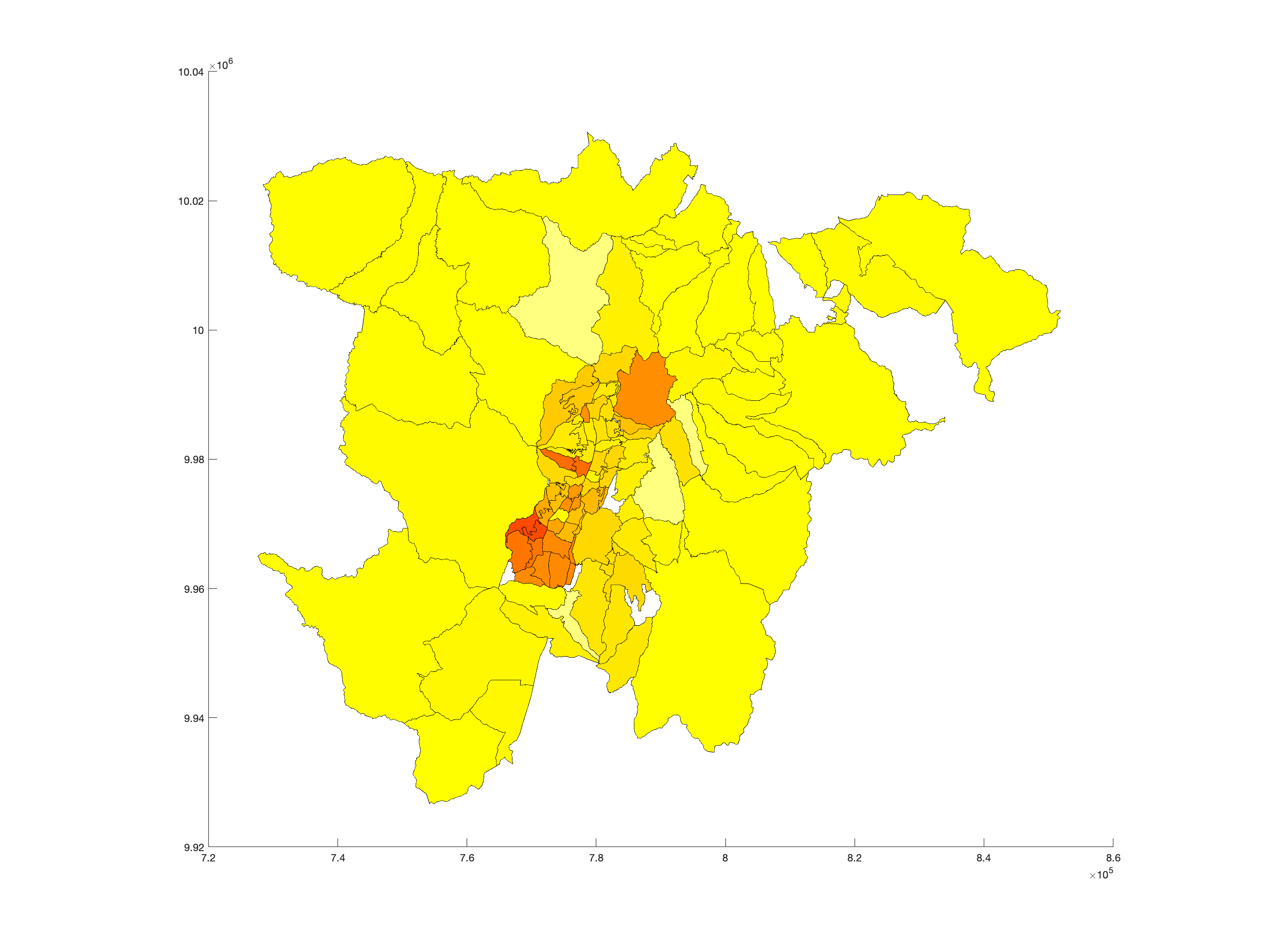}
  \caption{Filtered graph data}
  \end{subfigure}
  \caption{Graph trend filtering of COVID-19 data in a graph of Pichincha--Ecuador}
\end{figure}

\bibliographystyle{plain}

%\nocite{*}
\end{document}